\documentclass{amsart}
\usepackage{amsmath}
\usepackage{amsfonts}
\usepackage{amssymb}
\usepackage{fullpage}

\usepackage{latexsym}
\usepackage{color}
\usepackage{tikz}
\usepackage{lineno}
\usepackage{comment}
\usepackage{soul}

\newtheorem{thm}{Theorem}
\newtheorem{lem}[thm]{Lemma}
\newtheorem{prop}[thm]{Proposition}
\newtheorem{cor}[thm]{Corollary}
\newtheorem{conj}[thm]{Conjecture}
\newtheorem*{thmA}{Theorem A}

\theoremstyle{definition}

\newtheorem{definition}[thm]{Definition}

\providecommand{\ben}{\begin{enumerate}}
\providecommand{\een}{\end{enumerate}}
\providecommand{\bit}{\begin{itemize}}
\providecommand{\eit}{\end{itemize}}
\providecommand{\bc}[2]{{#1\choose#2}}
\providecommand\gbin[2]{\genfrac{[}{]}{0pt}{}{#1}{#2}}
\providecommand{\floor}[1]{\lfloor #1 \rfloor}

\providecommand{\size}[1]{\left\lvert {#1} \right\rvert}
\providecommand{\gen}[1]{\langle #1 \rangle}
\providecommand{\zo}{\mathbf{0}}

\providecommand{\Z}{\mathbb{Z}}
\providecommand{\F}{\mathbb{F}}
\providecommand{\EE}{\mathcal{E}}
\providecommand{\FF}{\mathcal{F}}
\providecommand{\GG}{\mathcal{G}}
\providecommand{\II}{\mathcal{I}}
\providecommand{\LL}{\mathcal{L}}
\providecommand{\Ss}{\mathcal{S}}
\usepackage{hyperref}
\hypersetup{colorlinks=True}

\begin{document}

\title{$3$-cluster-free families of subspaces}

\author[Currier]{Gabriel Currier}
\address{Department of Mathematics \\
University of British Columbia \\
Vancouver, BC, Canada V6T 1Z2}
\email{currierg@math.ubc.ca}

\author[Shahriari]{Shahriar Shahriari}
\address{Department of Mathematics \& Statistics \\
Pomona College \\
Claremont, CA 91711}
\email{sshahriari@pomona.edu}

\subjclass[2000]{}

\keywords{Erd\H{o}s-Ko-Rado for vector spaces, 3-cluster-free, Vector spaces over finite fields, intersecting families, stars}

\date{September 8, 2023}

\begin{abstract}
Three $k$-dimensional subspaces $A$, $B$, and $C$ of an $n$-dimensional vector space $V$ over a finite field are called a \emph{$3$-cluster} if $A \cap B \cap C = \{\mathbf{0}_V\}$ and yet $\dim(A+B+C) \leq 2k$. A special kind of $3$-cluster, which we call a \emph{covering triple}, consists of subspaces $A,B,C$ such that $A = (A \cap B )\oplus (A \cap C)$. We prove that, for $2 \leq k \le n/2$, the largest size of a covering triple-free family of $k$-dimensional subspaces  is the same as the size of the largest such star (a family of subspaces all containing a designated non-zero vector). Moreover, we show that if $k < n/2$, then stars are the only families achieving this largest size. This in turn implies the same result for $3$-clusters, which gives the vector space-analogue of a theorem of Mubayi for set systems.
\end{abstract}
\maketitle

\section{Introduction}

Let $n$ be a positive integer, $q$ a power of prime, $\F_q$ a field of order $q$, and $V$ a vector space of dimension $n$ over $\F_q$. A family of subspaces of $V$ is called \emph{intersecting} if the intersection of each pair of subspaces in the collection is more than just the zero vector. If a family of subspaces satisfies the stronger property that they all contain a specific one-dimensional subspace, then the family is called a \emph{star}. The celebrated Erd\H{o}s-Ko-Rado (EKR) theorem for vector spaces (originally proved for most cases by Hsieh \cite{Hsieh:75}, but more conceptual proofs for all cases in more general settings appeared later) states that there are no intersecting families of $k$-dimensional spaces larger than stars.

\begin{thm}[Hsieh 1975 \cite{Hsieh:75}, Frankl-Wilson 1986 \cite{FranklWilson:86}, Godsil-Newman 2006 \cite{GodsilNewman:06}, Chowdhury-Patk\' os 2010 \cite{ChowdhuryPatkos:10}]\label{thm:vsEKR} Fix integers $k$ and $n$ with $n \geq 2k$, and suppose $\FF$ is an intersecting family of $k$-dimensional subspaces of an $n$-dimensional vector space over a finite field. Then $\size{\FF}$ is no more than the size of the largest such star. Moreover, for $n > 2k$, the only such families with  maximum size are stars.
\end{thm}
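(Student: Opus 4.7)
The natural setting is the $q$-analogue of the Kneser graph. Let $K_q(n,k)$ denote the graph whose vertices are the $k$-dimensional subspaces of $V$, with two vertices adjacent iff their intersection is $\{\zo\}$. Intersecting families are precisely the independent sets (cocliques) of $K_q(n,k)$, and every star through a fixed nonzero vector gives an independent set of size $\gbin{n-1}{k-1}_q$. My plan is thus to bound the independence number of $K_q(n,k)$ above by this Gaussian binomial and, for $n > 2k$, to classify the extremal independent sets as stars.

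For the size bound, I would apply the Hoffman ratio bound. The graph $K_q(n,k)$ is a single relation in the Grassmann association scheme, so its eigenvalues are explicitly known $q$-analogue expressions indexed by $i = 0, 1, \dots, k$. One reads off the regularity $d$ and identifies the minimum eigenvalue $\lambda_{\min}$; a short $q$-binomial manipulation then gives
\[
\gbin{n}{k}_q \cdot \frac{-\lambda_{\min}}{d - \lambda_{\min}} = \gbin{n-1}{k-1}_q,
\]
so Hoffman's bound yields $\size{\FF} \leq \gbin{n-1}{k-1}_q$ as desired.

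For uniqueness when $n > 2k$, I would use the equality case of the Hoffman bound: the characteristic vector of a maximum independent set must lie in the direct sum of the trivial eigenspace and the $\lambda_{\min}$-eigenspace. This eigenspace has an explicit description in terms of the $\GL(V)$-module structure on the Grassmannian of $k$-subspaces, and the task is to show that the only $\{0,1\}$-vectors it contains are characteristic functions of stars. The hypothesis $n > 2k$ is essential here: at $n = 2k$ there are exotic maximum intersecting families obtained by choosing one subspace from each complementary pair, and the $\lambda_{\min}$-eigenspace is correspondingly richer.

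The main obstacle I expect is the classification of $\{0,1\}$-vectors in the extremal eigenspace. The spectral upper bound itself is routine once the eigenvalues are in hand, but the uniqueness step requires either an explicit parameterization of that eigenspace (as in Godsil--Newman's treatment) or a stability-type argument ruling out near-extremal non-stars. Either route requires some care, particularly in tracking where the strict inequality $n > 2k$ is actually used.
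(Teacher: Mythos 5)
The paper does not prove Theorem \ref{thm:vsEKR} at all; it is a cited background result (attributed to Hsieh, Frankl--Wilson, Godsil--Newman, and Chowdhury--Patk\'os), and the paper simply invokes it. So there is no internal proof to compare against.

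That said, your outline is a faithful sketch of the algebraic route taken by two of the cited references: the Hoffman (ratio) bound on the $q$-Kneser graph, which is Frankl--Wilson's method for the inequality, followed by analysis of the $\lambda_{\min}$-eigenspace for the uniqueness, which is the Godsil--Newman contribution. Your arithmetic claim checks out: with $d = q^{k^2}\gbin{n-k}{k}_q$ and $\lambda_{\min} = -q^{k(k-1)}\gbin{n-k-1}{k-1}_q$, one gets $\frac{-\lambda_{\min}}{d-\lambda_{\min}} = \frac{[k]_q}{[n]_q}$, so the bound is exactly $\gbin{n-1}{k-1}_q$. The weight of the argument is, as you say, in the uniqueness step, which you state but do not carry out; that is where the bulk of Godsil--Newman (or, via a very different shifting/kernel method, Chowdhury--Patk\'os) actually lies. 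One minor inaccuracy in your motivation: at $n = 2k$ the ``exotic'' maximum intersecting families in the vector-space setting are not obtained by picking one member of each complementary pair (each $k$-subspace has $q^{k^2}$ complements, not one); the standard examples are dual stars, i.e.\ all $k$-subspaces contained in a fixed hyperplane, which have size $\gbin{2k-1}{k}_q = \gbin{n-1}{k-1}_q$. This doesn't affect your proof outline, but the set-theoretic picture you gave does not transfer.
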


A set of two $k$-dimensional subspaces that intersect trivially is also called a \emph{$2$-cluster}, and, so, an intersecting family is the same as a \emph{$2$-cluster-free} family. Now, if $\{A, B\}$ is a $2$-cluster, then $\dim(A+B) = 2k$. By analogy, for $1 \leq k \leq n/2$, three $k$-dimensional subspaces $A$, $B$, and $C$ are called a \emph{$3$-cluster} if $A \cap B \cap C = \{\zo_V\}$ and yet $\dim(A+B+C) \leq 2k$. A family of $k$-dimensional subspaces of $V$ is called \emph{$3$-cluster-free} if it contains no $3$-clusters. A star is, of course, automatically $3$-cluster-free.

Our main goal in this paper is to prove that, for the right parameters, the maximum-sized $3$-cluster-free families are stars. In fact, we will prove a stronger statement. If $A$, $B$, $C$ are $k$-dimensional subspaces of $V$ with $A = (A \cap B) \oplus (A \cap C)$, we say that $A, B, C$ are a \emph{covering triple}. Clearly all covering triples are $3$-clusters as well. We prove

\begin{thmA}[Theorem \ref{thm:Main} below]
Let $V$ be an $n$-dimensional vector space over $\F_q$, and $2 \leq k \leq n/2$. The largest size of a covering-triple-free family is the same as the size of the largest star (of $k$-dimensional subspaces). Moreover, for $2 \leq k < n/2$, stars are the only families achieving this largest size.
\end{thmA}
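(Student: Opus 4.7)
My plan is to dichotomize on whether $\FF$ is intersecting. Any star is covering-triple-free because a common nonzero vector $v \in A \cap B \cap C$ lies in $(A \cap B) \cap (A \cap C)$, obstructing the direct-sum identity $A = (A \cap B) \oplus (A \cap C)$. Thus the star size $\gbin{n-1}{k-1}_q$ is attained, so the content is the matching upper bound together with the uniqueness claim when $k < n/2$. If $\FF$ is intersecting, Theorem~\ref{thm:vsEKR} gives both conclusions immediately. Hence the substantive case is that $\FF$ is covering-triple-free but not intersecting, where the target is the \emph{strict} inequality $|\FF| < \gbin{n-1}{k-1}_q$.

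The engine is a local constraint extracted from any disjoint pair. Fix $A, B \in \FF$ with $A \cap B = \{\zo_V\}$ and set $W := A + B$, of dimension $2k$. In the covering-triple identity $X = (X \cap Y) \oplus (X \cap Z)$ applied to $\{A,B,C\}$, the roles $X = A$ and $X = B$ collapse (because $A \cap B = \{\zo_V\}$ kills a summand, forcing $C \in \{A, B\}$); only the role $X = C$ survives. Consequently, for every $C \in \FF \setminus \{A, B\}$,
\[
\dim(A \cap C) + \dim(B \cap C) \;\le\; k - 1.
\]
In particular $\FF \setminus \{A, B\}$ must avoid every $k$-subspace of $W$ of the form $U \oplus V$ with $U \subseteq A$ and $V \subseteq B$, of which there are $\sum_{i=0}^{k} \gbin{k}{i}_q^{2}$; this is what the local constraint buys inside $W$.

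To turn the local information into a global bound I would induct on $n$. In the base case $n = 2k$ we have $W = V$, so the constraint applies to every member of $\FF$; the analysis is a finite one inside a $2k$-dimensional space. At this boundary the dual star (all $k$-subspaces of a fixed $(n-1)$-dimensional hyperplane of $V$) also realizes the extremum, which is precisely why the theorem drops uniqueness at $k = n/2$. For the inductive step ($n > 2k$), I would pick a hyperplane $H$ of $V$ containing $W$ (available because $n-1 \ge 2k$), note that $\{C \in \FF : C \subseteq H\}$ is itself covering-triple-free, apply the inductive hypothesis to bound it by $\gbin{n-2}{k-1}_q$, and then bound the ``protruding'' part $\{C \in \FF : C \not\subseteq H\}$ using additional covering-triple constraints from disjoint pairs in $\FF$ not contained in $H$. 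Summing, the goal is to beat $\gbin{n-1}{k-1}_q$ by a positive margin, which simultaneously yields uniqueness when $k < n/2$.

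The main obstacle is that the local constraint from a single disjoint pair is far too weak on its own: a generic $k$-subspace of $V$ has $\dim(A \cap C) = \dim(B \cap C) = 0$ and satisfies it automatically, so one cannot hope to beat $\gbin{n-1}{k-1}_q$ by counting $C$ admissible with respect to $(A, B)$ alone. The decisive step is to pool local constraints across \emph{all} disjoint pairs in $\FF$, or equivalently, to set up the inductive step so that the savings inside $W$ precisely compensate for the count outside $W$. The base case $n = 2k$ likewise requires care since the extremum there is realized by two different families, and the argument must remain agnostic between stars and dual stars.
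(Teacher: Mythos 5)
Your local observation is correct: if $A\cap B=\{\zo_V\}$, then for $C\notin\{A,B\}$ the only possible covering-triple relation is $C=(C\cap A)\oplus(C\cap B)$, and since $(C\cap A)\cap(C\cap B)=A\cap B\cap C=\{\zo_V\}$, avoiding it is exactly the constraint $\dim(A\cap C)+\dim(B\cap C)\le k-1$. But the rest of the proposal is a plan for a proof, not a proof, and the plan has two genuine problems.

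First, the crucial step --- turning the weak pairwise constraint into the tight global bound $\gbin{n-1}{k-1}_q$ --- is left entirely open, and you acknowledge this yourself (``The decisive step is to pool local constraints across all disjoint pairs\dots''). The proposed induction on $n$ via a hyperplane $H\supseteq W$ does not obviously close: bounding $\{C\in\FF:C\subseteq H\}$ by $\gbin{n-2}{k-1}_q$ and then ``bounding the protruding part'' would require showing $|\{C\in\FF:C\not\subseteq H\}|\le \gbin{n-1}{k-1}_q-\gbin{n-2}{k-1}_q=q^{n-k-1}\gbin{n-2}{k-2}_q$, with no mechanism offered to do so; this is exactly as hard as the original problem. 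The paper's route is entirely different and non-inductive in $n$: it fixes one $A\in\FF$, stratifies $\FF$ by the ``interaction dimension'' $i_A(B)$, and introduces a claiming map $\phi_A$ sending each $B$ to a collection of $(k-i)$-dimensional subspaces skew to $A$. It then shows (Lemma \ref{lem:intersectsornot}, Proposition \ref{prop:CstarforDE}) that all of $\phi_A^{-1}(D)$ decomposes through a single $C^\star\in\FF$, so that $\{C^\star\cap B:B\in\phi_A^{-1}(D)\}$ is intersecting inside the $k$-space $C^\star$; it then invokes the nonuniform EKR-type profile inequality of Gerbner--Patk\'os (Corollary \ref{cor:EKRtypenonuniform}) to bound $|\phi_A^{-1}(D)|$ and $\sum_E|\phi_A^{-1}(E)|$ jointly, and finally glues these bounds together with a fractional matching argument (Theorem \ref{thm:normalizedmatching}) to control $|\FF_i|+|\FF_{k-i}|$. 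None of that machinery appears in your sketch.

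Second, the claim that at $n=2k$ the dual star (all $k$-subspaces of a fixed hyperplane $H$ with $\dim H=2k-1$) is another covering-triple-free extremum is false. Take any $A\in\gbin{H}{k}$, write $A=U\oplus W$ with $\dim U,\dim W\ge 1$, and choose $B,C\in\gbin{H}{k}$ with $B\cap A=U$ and $C\cap A=W$ (possible since $\dim H=2k-1\ge k+\dim U$ and similarly for $W$). Then $A=(A\cap B)\oplus(A\cap C)$, so the dual star contains a covering triple whenever $k\ge 2$. The paper does not claim a second extremal family at $n=2k$; it simply does not establish uniqueness there (its argument needs $\gbin{k-1}{1}_q\ge 3$ together with the strict-inequality bookkeeping that breaks down at $n=2k$).
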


The original Erd\H{o}s-Ko-Rado theorem  is about families of subsets of $[n] = \{1, 2, \ldots, n\}$. A family of subsets is \emph{intersecting} if every pair of subsets in the collection has a non-empty intersection. If every subset in the collection contains some specific $i \in [n]$, then the family is a \emph{star}.  Erd\H{o}s, Ko, and Rado \cite{ErdosKoRado:61} proved that, for $1 \leq k \leq n/2$, the largest size of an intersecting family of subsets of size $k$ of a set with $n$ elements is $\bc{n-1}{k-1}$. This is, of course, also the size of a largest star of $k$-sets. They also characterized the families of extremal size: if the size of the family is actually equal to $\bc{n-1}{k-1}$ and $n > 2k$, then the family must be a star. This theorem has numerous proofs, many generalizations (see Frankl and Tokushige \cite{FranklTokushige:16} for an excellent survey), and a plethora of fruitful research areas take it as their starting point (see Godsil and Meagher \cite{GodsilMeagher:16}, as an example).

Naively, one may be tempted to see what happens if---instead of forbidding a pair of subsets $A$, $B$ with $A \cap B = \emptyset$---we forbid, in our family, the existence of three subsets $A$, $B$, and $C$ with $A \cap B \cap C = \emptyset$. If the size of the family is at least three, then any such family will also be intersecting and so, as long as $n \geq 2k$, the original EKR theorem already answers the question. (Frankl \cite{Frankl:76} showed that, for this three set version, the conclusion remains valid even if $n \geq 3k/2$.) Hence, to get an interesting question with three sets, we need to forbid fewer configurations, and so in addition to $A \cap B \cap C = \emptyset$, we need additional restrictions on the forbidden configuration. Erd\H{o}s \cite{Erdos:71}, Chv\'atal \cite{Chvatal:74}, Mubayi and Verstra\"ete \cite{MubaiVerstraete:05} and Keevash and Mubayi \cite{KeevashMubayi:10} have worked on and proposed such questions.

More directly relevant to our project,  Frankl and F\"uredi \cite{FranklFuredi:83}---partially answering a question of Katona---showed, somewhat surprisingly, that, for $n > k^2+3k$, the maximum size of a family of $k$-subsets of $[n]$ that contains no $A$, $B$, and $C$ with $A \cap B \cap C = \emptyset$ and $\size{A \cup B \cup C} \le 2k$ continues to be $\displaystyle{\bc{n-1}{k-1}}$. This result was extended $n \geq 3k/2$ (for $k \geq 3$) by Mubayi \cite{Mubayi:06}, who then asked a similar question for so-called $d$-clusters, although he did not use that terminology at the time. For $d \geq 2$,  a \emph{$d$-cluster} is defined to be a collection of $d$ subsets of size $k$ of $[n]$ with trivial intersection whose union has at most $2k$ elements. Having proved that the size of a $3$-cluster-free family can be no more than the size of a star, Mubayi conjectured that, in fact, the size of any $d$-cluster-free family of $k$-subspaces, for $n \geq dk/(d-1)$, and $3 \leq d \leq k$, can be no more than the size of a star. For large $n$, the conjecture was settled by Mubayi \cite{Mubayi:07}, Mubayi and Ramadurai \cite{MubayiRam:09} and independently F\"{u}redi and \"{O}zkahya \cite{FurediOzkahya:11}. In Currier \cite{Currier:21}, the first author settled the conjecture in the affirmative for all $n \geq dk/(d-1)$.

Investigating vector space analogues of problems for set systems has a long history going back to at least Gian Carlo Rota.  Notable successes, other than the EKR theorem for vector spaces discussed earlier, are the proof of  the vector space analogues of Lov\'{a}sz's version of the Kruskal-Katona theorem (Chowdhury and Patk\' os \cite{ChowdhuryPatkos:10}), the Hilton-Milner theorem (Blokuis, Brouwer, Chowdhury, Frankl, Mussche, Patk\' os, and Sz\H onyi\cite{Blokhuisetal:10}), and the proof of Manickam--Mikl\'{o}s--Singhi conjecture (Chowdhury, Sarkis, and Shahriari \cite{ChoSarSha:14}, Ihringer \cite{Ihringer:16}, and Huang and Sudakov \cite{HuangSudakov:14}). More recently, and related to the EKR point of view, there has been progress in finding or bounding the maximum size of families of subspaces (not all necessary of the same dimension) avoiding a forbidden configuration. Sarkis, Shahriari, and PCURC \cite{PCURC-S11:14} found an upper bound for diamond-free families of subspaces, Shahriari and Yu \cite{ShahYu:20} and Xiao and Tompkins \cite{XiaoTompkins:20} found the maximum size of families avoiding certain brooms, forks, and butterflies. 

While there are cases that a problem posed for the poset of subspaces is easier than the one for sets (e.g., the existence of a partition of the poset into a minimum number of chains with as equal length as possible---the so-called F\"{u}redi partition---has been completely settled for vector spaces by Hsu, Logan, and Shahriari \cite{HsuLogSha:06}, while for sets it has only been recently settled for very large $n$ by Sudakov, Tomon, and Wagner \cite{SudakovTomonWagner:22}) most of the time, an extremal problem poses new challenges for vector spaces since many of the central techniques in extremal set theory do not directly translate to vector spaces. As a result, many open problems remain.

The corollary to our main result---that under the obvious conditions, the maximum size of a covering triple-free collection is bounded by the size of a star---gives immediately a tight bound on the maximum size of a $3$-cluster-free family. This gives a direct vector space analogue of Mubayi's aforementioned result \cite{Mubayi:06} for sets. In addition, our techniques are purely combinatorial, differ from Mubayi's, and are, as far as we know, new. 

In the next section we will introduce the necessary preliminaries and background information surrounding vectors spaces over finite fields. In section \ref{sec:sketch}, we will sketch an idea for the proof that we hope will aid in understanding the main results. In section \ref{sec:main}, we prove Theorem A.

\section{Notation and Preliminaries}

We denote by $\LL(V)$ the poset of subspaces of $V$ ordered by inclusion, and, for $0 \leq k \leq n$, $\gbin{V}{k}$ will denote the collection of subspaces of dimension $k$ in $\LL(V)$. Let $U, W \in \LL(V)$. If $U \cap W = \{\zo_V\}$, then the two subspaces are said to be \emph{skew}. If two subspaces are not skew, they are said to \emph{intersect non-trivially}. For $U \in \LL(V)$, by abuse of notation, $\displaystyle{\gbin{V - U}{k}}$ will denote all $k$ dimensional subspaces of $V$ that are skew to $U$.

In analogy with the binomial coefficients, define $[n]_q = \dfrac{q^n-1}{q-1} = q^{n-1} + \cdots + q + 1$, $[n]_q! = [n]_q [n-1]_q \cdots [2]_q [1]_q$, and $\displaystyle{\gbin{n}{k}_q = \frac{[n]_q!}{[k]_q!\ [n-k]_q!}}$ for $0 \leq k \leq n$. The integers $\displaystyle{\gbin{n}{k}_q}$ are called \emph{Gaussian (or $q$-binomial) coefficients}. The next lemma gives a few  well-known facts about these integers. 

\begin{lem}\label{lem:prelimsub}
Let $n$ be a positive integer, $q$ a prime power, and $V$ a vector space of dimension $n$ over $\F_q$. 
\ben
\renewcommand{\theenumi}{\alph{enumi}}
\item\ \label{lem:subspacecount} Let $0 \leq m, i \leq n$, and $\displaystyle{W \in \gbin{V}{m}}$ be fixed. Then 
$$\displaystyle{\size{\gbin{V - W}{i}} = q^{mi} \gbin{n-m}{i}_q}.$$
In particular, $\displaystyle{\size{\gbin{V}{i}} = \gbin{n}{i}_q}$, and  the number of subspaces $U$ with $V = W \oplus U$ is $q^{m(n-m)}$.
\item\ \label{lem:qrecurrence} For  $k \leq n$, we have
$$\gbin{n}{n-k}_q = \gbin{n}{k}_q  = q^{n-k}\gbin{n-1}{k-1}_q + \gbin{n-1}{k}_q.$$
\renewcommand{\theenumi}{\arab{enumi}}
\een
\end{lem}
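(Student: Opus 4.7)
The statements are standard facts about Gaussian coefficients, and the plan is to prove them by direct counting of ordered bases, which gives (a) essentially for free, and then to deduce (b) either algebraically or by a partitioning argument.

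For part (a), the strategy is to count ordered linearly independent $i$-tuples in $V$ that are skew to the fixed $m$-dimensional subspace $W$, and then divide by the number of ordered bases of any $i$-dimensional subspace. For a tuple $(v_1, \ldots, v_i)$ to span an $i$-dimensional subspace skew to $W$, the vector $v_j$ must lie outside $W + \langle v_1, \ldots, v_{j-1}\rangle$, a subspace of dimension $m + (j-1)$. This yields
\[
\prod_{j=0}^{i-1} (q^n - q^{m+j}) \;=\; q^{mi} \prod_{j=0}^{i-1}(q^{n-m} - q^j).
\]
Dividing by $\prod_{j=0}^{i-1}(q^i - q^j)$, the standard count of ordered bases of an $i$-dimensional space, gives $q^{mi}\gbin{n-m}{i}_q$. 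The case $m=0$ recovers $\size{\gbin{V}{i}} = \gbin{n}{i}_q$, and setting $i = n-m$ gives the count of complements, namely $q^{m(n-m)} \gbin{n-m}{n-m}_q = q^{m(n-m)}$, since the only $(n-m)$-dimensional subspace skew to $W$ of the right dimension automatically satisfies $W \oplus U = V$ by dimension.

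For part (b), the symmetry $\gbin{n}{n-k}_q = \gbin{n}{k}_q$ is visible directly from the formula $\frac{[n]_q!}{[k]_q![n-k]_q!}$. For the recurrence, my plan is to argue combinatorially: fix a hyperplane $H \in \gbin{V}{n-1}$ and split $\gbin{V}{k}$ according to whether a $k$-dimensional subspace $U$ is contained in $H$ or not. The subspaces $U \subseteq H$ contribute $\gbin{n-1}{k}_q$. For $U \not\subseteq H$, the intersection $U \cap H$ has dimension $k-1$; conversely, given any $(k-1)$-dimensional $K \subseteq H$, the $k$-dimensional subspaces $U$ with $U \cap H = K$ correspond bijectively to one-dimensional subspaces of $V/K$ that are not contained in $H/K$, of which there are $[n-k+1]_q - [n-k]_q = q^{n-k}$. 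Summing over $K$ gives $q^{n-k}\gbin{n-1}{k-1}_q$, and adding yields the recurrence.

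There is no real obstacle here; the only thing to be careful about is getting the exponent $mi$ right in the skew count, and confirming that the $q^{n-k}$ factor in the recurrence emerges from the correct dimension-counting in the quotient. Both are routine. If one prefers an algebraic derivation of (b), it follows just as quickly from the identity $[n]_q = q^{n-k}[k]_q + [n-k]_q$ applied to the factorial expression, but I find the combinatorial version more illuminating since it parallels the argument used for ordinary binomial coefficients.
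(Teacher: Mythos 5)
Your proof is correct. The paper states this lemma without proof, presenting it as a collection of well-known facts about Gaussian coefficients, so there is no in-paper argument to compare against. Your approach — counting ordered linearly independent tuples avoiding $W + \gen{v_1,\ldots,v_{j-1}}$ and dividing by the number of ordered bases for part (a), and decomposing $\gbin{V}{k}$ by containment in a fixed hyperplane for part (b) — is the standard derivation and the arithmetic (factoring $q^m$ out of each factor $q^n - q^{m+j}$, and computing $[n-k+1]_q - [n-k]_q = q^{n-k}$ for the lines of $V/K$ outside $H/K$) checks out.
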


We also need a result of Gerbner and Patk\'os \cite[Corollary p. 2867]{GerbnerPatkos:09}

\begin{thm}[Gerbner and Patk\'os \cite{GerbnerPatkos:09}]\label{thm:GerbnerPatkos}
Let $q$ be a prime power, $V$ a vector space of dimension $n$ over $\F_q$, $k < n/2$, and $n/2 < \ell \leq n-k$. Let $\GG_k \subseteq \gbin{V}{k}$, $\GG_{\ell} \subseteq \gbin{V}{\ell}$, and assume $\GG_k \cup \GG_{\ell}$ is intersecting. Then,
\begin{equation}
\frac{q^\ell\gbin{n-k}{\ell}_q}{\gbin{n-\ell-1}{k-1}_q} \size{\GG_k} + \size{\GG_{\ell}} \leq \gbin{n}{\ell}_q.\label{eq:GerbnerPatkos}
\end{equation}
\end{thm}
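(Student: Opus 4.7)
The plan is to bound both sides of the claimed inequality by a weighted double count of pairs $(A,B)$ with $A \in \GG_k$, $B \in \gbin{V}{\ell}$, and $A \cap B = \{\zo_V\}$. By Lemma~\ref{lem:prelimsub}(\ref{lem:subspacecount}), each $A \in \GG_k$ is skew to exactly $q^{k\ell}\gbin{n-k}{\ell}_q$ such $B$, so the total number of such pairs is $\size{\GG_k}\,q^{k\ell}\gbin{n-k}{\ell}_q$. On the other hand, the cross-intersecting hypothesis forces every such $B$ to lie outside $\GG_\ell$, so the same total equals $\sum_{B \notin \GG_\ell}\size{S(B)}$, where $S(B) := \{A \in \GG_k : A \cap B = \{\zo_V\}\}$.

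The heart of the argument will then be to show, for every $B \notin \GG_\ell$, the fiber bound
\[
\size{S(B)} \leq q^{(k-1)\ell}\gbin{n-\ell-1}{k-1}_q.
\]
To establish this, I would consider the quotient map $\pi\colon A \mapsto (A+B)/B$, which sends each $A \in S(B)$ to a $k$-dimensional subspace of the $(n-\ell)$-dimensional space $V/B$. If $v \in A \cap A' \setminus \{\zo_V\}$ for $A,A' \in S(B)$, then $v \notin B$, so $v+B$ is a nonzero element of $\pi(A) \cap \pi(A')$. Hence the image family $\pi(S(B))$ is intersecting in $V/B$, and Theorem~\ref{thm:vsEKR} applied inside $V/B$ bounds the number of distinct images by $\gbin{n-\ell-1}{k-1}_q$.

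For the fiber over a fixed image $P \in \pi(S(B))$, the preimages are exactly the $k$-spaces $A$ with $A \cap B = \{\zo_V\}$ and $(A+B)/B = P$. After fixing a reference preimage $A_0$, the preimage set becomes an affine copy of $\mathrm{Hom}(P,B) \cong \F_q^{k\ell}$, and a direct check will show that two preimages intersect non-trivially in $V$ precisely when their associated linear maps $P \to B$ differ by a map of rank strictly less than $k$. An application of Flanders' theorem on maximal ``singular'' subspaces of $k \times \ell$ matrices then caps any such set of maps, whose pairwise differences are all non-injective, at $q^{(k-1)\ell}$ elements. Multiplying this fiber bound by the bound on the number of images yields the desired estimate on $\size{S(B)}$.

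Substituting back into the double count produces
\[
\size{\GG_k}\,q^{k\ell}\gbin{n-k}{\ell}_q \leq \left(\gbin{n}{\ell}_q - \size{\GG_\ell}\right) q^{(k-1)\ell}\gbin{n-\ell-1}{k-1}_q,
\]
and dividing through by $q^{(k-1)\ell}\gbin{n-\ell-1}{k-1}_q$ rearranges to the claimed inequality. The hardest step will be making the fiber bound rigorous: Flanders' theorem is typically stated for linear subspaces, whereas here one needs its counterpart for arbitrary sets of matrices whose pairwise differences have rank less than $k$. A related boundary difficulty is the regime $n - \ell < 2k$, in which every pair of $k$-subspaces in $V/B$ already intersects non-trivially and Theorem~\ref{thm:vsEKR} gives no constraint on the number of images; this case will require a supplementary argument, likely by exploiting the cross-intersecting structure with $\GG_\ell$ directly to sharpen the fiber count.
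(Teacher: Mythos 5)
The paper does not prove Theorem~\ref{thm:GerbnerPatkos}; it is quoted verbatim as a corollary from Gerbner and Patk\'os \cite{GerbnerPatkos:09}, so there is no in-paper argument to compare yours against. Your double-counting skeleton and the rearrangement at the end are correct, and the reduction of each fiber over a fixed image $P \in \gbin{V/B}{k}$ to a set of $k\times\ell$ matrices whose pairwise differences have rank $< k$ is sound; the $q^{(k-1)\ell}$ cap for such sets is a genuine, if not elementary, fact (the rank-metric anticode bound, which does extend Flanders from subspaces to arbitrary sets).

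However, the step that bounds the number of distinct images by $\gbin{n-\ell-1}{k-1}_q$ via Theorem~\ref{thm:vsEKR} applied in $V/B$ is a real gap, not a boundary nuisance. EKR in $V/B$ requires $\dim(V/B) = n-\ell \geq 2k$, i.e.\ $\ell \leq n-2k$. But the theorem's hypotheses only give $k \leq n-\ell < n/2$, and in particular $\ell = n-k$ (which is allowed) forces $n-\ell = k < 2k$. Worse, whenever $n/4 \le k < n/2$ the entire admissible range $n/2 < \ell \le n-k$ lies in the bad regime. In that regime all $k$-subspaces of $V/B$ pairwise intersect, so the intersecting condition on images is vacuous and the number of images can be as large as $\gbin{n-\ell}{k}_q > \gbin{n-\ell-1}{k-1}_q$; the product of your two bounds then overshoots the needed fiber estimate $\size{S(B)} \le q^{(k-1)\ell}\gbin{n-\ell-1}{k-1}_q$. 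That fiber estimate is really an EKR theorem for intersecting families in the attenuated space of $k$-subspaces skew to a fixed $\ell$-space (the bound is exactly the size of a star there), and proving it is itself a nontrivial task that your image/fiber factorization does not accomplish; deferring it to an unspecified ``supplementary argument'' leaves the central case unresolved. To make this route work you would need either a genuine EKR theorem for the attenuated space structure, or a fundamentally different way to bound $\size{S(B)}$ that treats the fibers jointly rather than one at a time.
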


For our purposes, we need a slight strengthening of a special case of Theorem \ref{thm:GerbnerPatkos}.

\begin{cor}\label{cor:EKRtypenonuniform} 
Let $q$ be a prime power, $V$ a vector space of dimension $n$ over $\F_q$, and $1 \leq k \le n/2$. Let $\GG_k \subseteq \gbin{V}{k}$, $\GG_{n-k} \subseteq \gbin{V}{n-k}$, and assume $\GG_k \cup \GG_{n-k}$ is intersecting. Let $\alpha \geq \begin{cases} 1 & \textrm{if $k = n/2$} \\ q^{n-k} & \textrm{if $k < n/2$.} \end{cases}$ Then,
\begin{equation}
\alpha \size{\GG_k} + \size{\GG_{n-k}} \leq \alpha \gbin{n-1}{k-1}_q + \gbin{n-1}{k}_q.\label{eq:GkGnkinequality}
\end{equation}
Furthermore, if Equation \eqref{eq:GkGnkinequality} is an equality, then $\size{\GG_{n-k}} \geq \gbin{n-1}{k}_q$.
\end{cor}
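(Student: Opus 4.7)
The plan is to derive \eqref{eq:GkGnkinequality} as a non-negative combination of two existing bounds: Theorem \ref{thm:GerbnerPatkos} specialized to $\ell = n-k$, together with the EKR bound from Theorem \ref{thm:vsEKR} applied to $\GG_k$ alone. The case $k=n/2$ must be handled separately, since Theorem \ref{thm:GerbnerPatkos} requires $k < n/2$ and its permitted range $n/2 < \ell \le n-k$ becomes empty when $k=n/2$.

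For $k < n/2$, set $\ell = n-k$ in Theorem \ref{thm:GerbnerPatkos}. Since $\gbin{n-k}{n-k}_q = \gbin{k-1}{k-1}_q = 1$ and $\gbin{n}{n-k}_q = \gbin{n}{k}_q$, that bound collapses to
\[
q^{n-k}\size{\GG_k} + \size{\GG_{n-k}} \le \gbin{n}{k}_q.
\]
Separately, $\GG_k$ is an intersecting family of $k$-subspaces (since it is a subfamily of the intersecting family $\GG_k \cup \GG_{n-k}$), so Theorem \ref{thm:vsEKR} yields $\size{\GG_k} \le \gbin{n-1}{k-1}_q$. Adding $(\alpha - q^{n-k}) \ge 0$ copies of this EKR bound to the Gerbner--Patk\'os bound gives
\[
\alpha \size{\GG_k} + \size{\GG_{n-k}} \le (\alpha - q^{n-k}) \gbin{n-1}{k-1}_q + \gbin{n}{k}_q,
\]
and the recurrence in Lemma \ref{lem:prelimsub}(b), namely $\gbin{n}{k}_q = q^{n-k}\gbin{n-1}{k-1}_q + \gbin{n-1}{k}_q$, rewrites the right side as $\alpha \gbin{n-1}{k-1}_q + \gbin{n-1}{k}_q$, which is exactly \eqref{eq:GkGnkinequality}. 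For $k=n/2$, both $\GG_k$ and $\GG_{n-k}$ are intersecting subfamilies of $\gbin{V}{k}$, so Theorem \ref{thm:vsEKR} gives $\size{\GG_k}, \size{\GG_{n-k}} \le \gbin{n-1}{k-1}_q = \gbin{n-1}{k}_q$ (the two Gaussian coefficients coincide when $n=2k$), and summing with weights $\alpha$ and $1$ gives the bound for any $\alpha \ge 0$.

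For the ``furthermore'' clause, I would trace the slack through the combined inequalities. If $k=n/2$ and equality holds, then $\size{\GG_{n-k}} = \gbin{n-1}{k-1}_q = \gbin{n-1}{k}_q$. If $k < n/2$ and $\alpha > q^{n-k}$, equality forces both the EKR and Gerbner--Patk\'os pieces to be tight, giving $\size{\GG_k} = \gbin{n-1}{k-1}_q$ and hence $\size{\GG_{n-k}} = \gbin{n}{k}_q - q^{n-k}\gbin{n-1}{k-1}_q = \gbin{n-1}{k}_q$. If $k < n/2$ and $\alpha = q^{n-k}$, equality only requires the Gerbner--Patk\'os bound to be tight, and then $\size{\GG_{n-k}} = \gbin{n}{k}_q - q^{n-k}\size{\GG_k} \ge \gbin{n}{k}_q - q^{n-k}\gbin{n-1}{k-1}_q = \gbin{n-1}{k}_q$ using the EKR bound on $\size{\GG_k}$.

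There is really no substantial obstacle --- the entire proof is a short linear-combination argument. The only points requiring care are recognizing that $\ell = n-k$ is the correct specialization of Theorem \ref{thm:GerbnerPatkos}, checking that the $\alpha$-weighted EKR bound is added with the correct (non-negative) coefficient precisely because of the hypothesis $\alpha \ge q^{n-k}$, and applying the Pascal-type identity of Lemma \ref{lem:prelimsub}(b) to convert $\gbin{n}{k}_q$ into the form appearing on the right of \eqref{eq:GkGnkinequality}.
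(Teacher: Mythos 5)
Your proposal is correct and follows essentially the same route as the paper: specialize Theorem \ref{thm:GerbnerPatkos} to $\ell = n-k$, add the nonnegative multiple $(\alpha - q^{n-k})$ of the EKR bound on $\size{\GG_k}$, invoke the recurrence from Lemma \ref{lem:prelimsub}(b), and treat $k=n/2$ separately by applying EKR to each of $\GG_k$ and $\GG_{n-k}$. The equality analysis in the paper is a one-line observation (equality plus $\size{\GG_k} \le \gbin{n-1}{k-1}_q$ immediately forces $\size{\GG_{n-k}} \ge \gbin{n-1}{k}_q$), whereas you split into sub-cases on $\alpha$; that is harmless but unnecessary.
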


\begin{proof}
In the case $k= n/2$, both $\GG_k$ and $\GG_{n-k}$ are intersecting families in $\gbin{V}{n/2}$ and $\gbin{n-1}{k-1}_q = \gbin{n-1}{k}_q = \gbin{n-1}{n/2}_q$. Hence, by the regular EKR theorem for vector spaces, Theorem \ref{thm:vsEKR}, each of $\size{\GG_k}$ and $\size{\GG_{n-k}}$ are no more than $\gbin{n-1}{n/2}_q$, and Equation \eqref{eq:GkGnkinequality} follows. Also, to have equality, we need $\size{\GG_{n-k}} = \gbin{n-1}{k}_q$.  Hence, we can assume  $k < n/2$ and $\alpha \ge q^{n-k}$. Letting $\ell = n-k$ in Equation \eqref{eq:GerbnerPatkos} of Theorem \ref{thm:GerbnerPatkos}, and using  Lemma \ref{lem:prelimsub}\eqref{lem:qrecurrence}, we have
\begin{equation}
q^{n-k}\size{\GG_k} + \size{\GG_{n-k}} \leq \gbin{n}{n-k}_q = \gbin{n}{k}_q = q^{n-k} \gbin{n-1}{k-1}_q + \gbin{n-1}{k}_q.
\end{equation}
 By Theorem \ref{thm:vsEKR}, $\displaystyle{\size{\GG_k} \leq \gbin{n-1}{k-1}_q}$.  Thus, since  $\alpha \geq q^{n-k}$, we have 
\begin{align*} \alpha \size{\GG_k} + \size{\GG_{n-k}}  & = q^{n-k} \size{\GG_k} + \size{\GG_{n-k}} + (\alpha - q^{n-k})\size{\GG_k} \\ 
	& \leq q^{n-k} \gbin{n-1}{k-1}_q + \gbin{n-1}{k}_q + (\alpha - q^{n-k})\gbin{n-1}{k-1}_q  \\ 
	&= \alpha \gbin{n-1}{k-1}_q + \gbin{n-1}{k}_q,
\end{align*}
as predicted by Equation \eqref{eq:GkGnkinequality}. Since $\size{\GG_k} \leq \gbin{n-1}{k-1}_q$,  to have equality in Equation \eqref{eq:GkGnkinequality}, we  need $\size{\GG_{n-k}} \geq \gbin{n-1}{k}_q$. This completes the proof. \qedhere
\end{proof}

\section{Idea of the Proof for the Main Theorem}\label{sec:sketch}

To prove that the largest covering-triple-free family is as big as a star, we aspire to mimic a particular way of counting the number of $k$-dimensional subspaces in a star.

\begin{lem}\label{lem:starcount}
Let $q$ be a prime power, $V$ a vector space of dimension $n$ over $\F_q$, $\zo_V \neq v \in V$, and $1 \leq k \leq n/2$. Let $\FF$ be the family of all $k$-dimensional subspaces of $V$ that contain $v$, and let $A \in \FF$ be fixed. Finally, for $1 \leq i \leq k$,  let $\FF_i$ consist of those subspaces in $\FF$ whose intersection with $A$ has dimension exactly $i$. Then
$$\size{\FF_i} = q^{(k-i)^2} \gbin{k-1}{i-1}_q \gbin{n-k}{k-i}_q.$$
As a result, 
$$\gbin{n-1}{k-1}_q = \size{\FF} = \sum_{i=1}^{k} q^{(k-i)^2} \gbin{k-1}{i-1}_q \gbin{n-k}{k-i}_q.$$
\end{lem}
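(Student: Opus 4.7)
The plan is to count $|\FF_i|$ by a two-stage procedure: first choose the $i$-dimensional intersection $U = A \cap B$ inside $A$, and then count the $k$-dimensional $B$'s that realize this exact intersection. Note that any $B \in \FF_i$ satisfies $v \in A \cap B$, so $U$ is forced to contain $v$. This splits the count as
\[ \size{\FF_i} = \#\{U\} \cdot \#\{B \mid A \cap B = U\}, \]
where the second count is independent of which $U$ we chose (by symmetry, or by carrying out the argument verbatim for each $U$).

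For the first stage, an $i$-dimensional subspace of $A$ containing $v$ corresponds via the quotient $A/\gen{v}$ to an $(i-1)$-dimensional subspace of a space of dimension $k-1$, giving $\gbin{k-1}{i-1}_q$ choices. For the second stage, fix such a $U$ and pass to the quotient $V/U$, which has dimension $n-i$. Any $B$ with $U \subseteq B$, $\dim B = k$, and $A \cap B = U$ corresponds to a $(k-i)$-dimensional subspace $B/U$ of $V/U$ that is skew to $A/U$ (which itself has dimension $k-i$); conversely, any such subspace pulls back to a valid $B$. Applying Lemma~\ref{lem:prelimsub}\eqref{lem:subspacecount} in $V/U$ with ambient dimension $n-i$, fixed subspace $A/U$ of dimension $m = k-i$, and subspaces of dimension $k-i$, gives
\[ q^{(k-i)(k-i)}\gbin{(n-i)-(k-i)}{k-i}_q = q^{(k-i)^2}\gbin{n-k}{k-i}_q. \]
Multiplying the two stages yields the claimed formula for $|\FF_i|$.

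The second assertion is immediate: since $\FF$ is partitioned by the $\FF_i$ for $1 \le i \le k$, we get $\size{\FF} = \sum_{i=1}^k q^{(k-i)^2}\gbin{k-1}{i-1}_q\gbin{n-k}{k-i}_q$, and $\size{\FF} = \gbin{n-1}{k-1}_q$ because the $k$-dimensional subspaces containing $v$ correspond to the $(k-1)$-dimensional subspaces of $V/\gen{v}$, an $(n-1)$-dimensional space.

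There is no real obstacle here; the argument is a clean double count. The only point that needs care is enforcing the \emph{exact} equality $A \cap B = U$ rather than the weaker containment $A \cap B \supseteq U$, and this is handled automatically by requiring $B/U$ to be skew to $A/U$ in the quotient $V/U$, which is exactly the setting where Lemma~\ref{lem:prelimsub}\eqref{lem:subspacecount} applies.
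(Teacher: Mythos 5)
Your proof is correct and uses the same underlying counting lemma (Lemma~\ref{lem:prelimsub}\eqref{lem:subspacecount}), but it refactors the decomposition slightly: you fix $U = A \cap B$ and then count all admissible $B$ in one step by passing to the quotient $V/U$ and counting $(k-i)$-dimensional subspaces skew to $A/U$, which yields $q^{(k-i)^2}\gbin{n-k}{k-i}_q$ directly. The paper instead splits that second stage in two, first counting the possible sums $A+B$ as $(k-i)$-dimensional subspaces of $V/A$ (giving $\gbin{n-k}{k-i}_q$) and then, with both $A+B$ and $A\cap B$ fixed, counting complements of $A/(A\cap B)$ in $(A+B)/(A\cap B)$ (giving $q^{(k-i)^2}$). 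The two arguments are equivalent by the identity $q^{(k-i)^2}\gbin{n-k}{k-i}_q = q^{(k-i)(k-i)}\gbin{(n-i)-(k-i)}{k-i}_q$, which is exactly what your single application of Lemma~\ref{lem:prelimsub}\eqref{lem:subspacecount} in $V/U$ produces; your version is marginally more compact, while the paper's three-stage version has the advantage of making $A+B$ explicit (which matches Figure~\ref{fig:starcounting} and foreshadows the claiming structure used later). Your final paragraph (partition of $\FF$ and the identification with $(k-1)$-subspaces of $V/\gen{v}$) matches the paper.
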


\begin{proof}
If $B \in \FF_i$, then $\dim(A \cap B) = i$, and $\dim(A+B) = 2k-i$ (see Figure \ref{fig:starcounting}). There are as many subspaces of dimension $2k-i$ that contain $A$, as there are subspaces of dimension $k-i$ in $V/A$, a vector space of dimension $n-k$. Hence, the number of choices for $A+B$ is $\displaystyle{\gbin{n-k}{k-i}_q}$. Likewise, the number of choices for $A \cap B$, is the same as the number of subspaces of dimension $i-1$ of $A/\gen{v}$, and is equal to $\displaystyle{\gbin{k-1}{i-1}_q}$. Having fixed $A+B$ and $A \cap B$, the number of choices for $B$ is the same as the number of $(k-i)$- dimensional subspaces of $(A+B)/(A \cap B)$ that are skew to $A/(A \cap B)$ (see Figure \ref{fig:starcounting}), which , by Lemma \ref{lem:prelimsub}\eqref{lem:subspacecount}, is equal to $q^{(k-i)^2}$. Hence, $\size{\FF_i} = q^{(k-i)^2} \gbin{k-1}{i-1}_q \gbin{n-k}{k-i}_q$. The rest follows since $\FF$ is a disjoint union of $\FF_i$, and the number of $k$-dimensional subspaces of $V$ that contain $\gen{v}$ is $\gbin{n-1}{k-1}_q$. \qedhere
\end{proof}

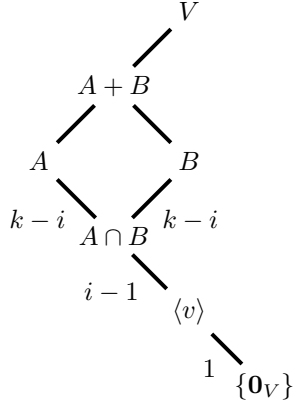
\begin{figure}[htbp]
\begin{tikzpicture}
\node (0) at (1,-1) {$\{\zo_V\}$};
\node (v) at (0,0){$\gen{v}$};
\node (A) at (-2,2) {$A$};
\node (AiB) at (-1,1) {$A \cap B$};
\node (B) at (0,2) {$B$};
\node (AB) at (-1,3) {$A+B$};
\node (V)at (0,4) {$V$};
\draw[line width=1.5pt] (0)--node[below left]{$1$}(v)--node[below left]{$i-1$}(AiB)--node[below left]{$k-i$}(A)--(AB)--(V) (AB)--(B)--node[below right]{$k-i$}(AiB);
\end{tikzpicture}
\caption{$A$ and $B$ are $k$-dimensional subspaces of the $n$-dimensional space $V$. $A \cap B$ is an $i$-dimensional subspace containing $\gen{v}$, and $(A+B)/A$ is a $k-i$ dimensional subspace of the $(n-k)$-dimensional space $V/A$.\label{fig:starcounting}}
\end{figure}

Now, suppose we want to apply the same count to an arbitrary covering-triple-free family $\FF$ of $k$-dimensional subspaces. We will again fix $A \in \FF$, and will want to divide the rest of $\FF$ into subfamilies $\FF_i$. If $B \in \FF$ is such that $\dim(A\cap B) = i \neq 0$, then we put $B$ into $\FF_i$. If $A \cap B = \{\zo_V\}$, then $B$ will be assigned to $\FF_i$ if $i$ is the minimum non-zero value of $\dim(B \cap C)$ among all $C \in \FF$. But what if $B$ is skew to \emph{every} element of $\FF$? Such subspaces---which we will collect in a family called $\FF^\star$--- will need to be dealt with separately, and this results in annoying extra cases. However, at the end, there can't be too many such subspaces in a covering-triple-free family, and they are harmless. We choose to include them in every $\FF_i$.

After these modifications, $\size{\FF}$ will continue to be bounded by $\sum_{i=1}^k \size{\FF_i}$, and so we need effective bounds for each of $\size{\FF_i}$. To give a somewhat vague description of our method for bounding $\size{\FF_i}$, ignore the complications, and assume that we have fixed $A \in \FF$ in such a way that every other element of $\FF$ intersects $A$ non-trivially. In such a happy situation, $\FF_i$ will consist merely of subspaces in $\FF$ whose intersection with $A$ is $i$-dimensional. 

We say---for the sake of this heuristic argument---that $B \in \FF_i$ \emph{claims} a subspace $\displaystyle{D \in \gbin{V - A}{k-i}}$ if $D \oplus (B \cap A) = B$. Now, it is certainly possible for a number of $B$'s to claim the \emph{same} $D$. However,  if $B_1$, $\ldots$, $B_r$ all claim $D$, then all of these $B$'s are in the same $\FF_i$, but more importantly, $B_1 \cap A$, $\ldots$, $B_r \cap A$ form an intersecting family inside $A$ (the assumption that $\FF$ is covering-triple-free  is essential here). As a result, Theorem \ref{thm:vsEKR}  will give us a non-trivial bound on the number of $B$'s that claim the same $D$, which in turn will give a bound on $\size{\FF_i}$ . Since we are using the EKR theorem for subspaces of a $k$-dimensional space $A$, the argument, unfortunately, will work only in the cases when $1 \leq i \leq k/2$.

When $i > k/2$, it is not possible  to establish an upper bound for each $\size{\FF_i}$ individually. For the final twist of the proof, we modify our methods, and instead  bound $\size{\FF_i} + \size{\FF_{k-i}}$, for each $1 \leq i < k/2$ (if $k$ is even, we continue to also use the earlier bound for $\size{\FF_{k/2}}$). The important point is that our upper bounds are precise enough that in every instance they mirror what would have happened if $\FF$ was a star. Hence, at the end, we can conclude that $\size{\FF}$ is no more than the size of a star.

In the next section, this plan will be carried out.

\section{Proof of Main Theorem}\label{sec:main}
For the rest of the paper, as before, $n$ is a positive integer, $q$ a power of prime, $\F_q$ a field of order $q$, and $V$ a vector space of dimension $n$ over $\F_q$. Furthermore $2 \le k \leq n/2$ is an integer.

The next (long) definition provides the basic infrastructure/notation for our proof.

\begin{definition}\label{def:iIphi}
Let $\FF \subseteq \gbin{V}{k}$. Define $\FF^\star$ to denote those elements of $\FF$ that are skew to every other element of $\FF$. Fix $A \in \FF$, and define a function $i_A \colon \FF \setminus \FF^* \to \{1, \ldots, k\} \subset \Z^{\geq 0}$ by
$$i_A(B) = \begin{cases}  \dim{(A \cap B)} & \textrm{if}\ A \cap B \neq \{\zo_V\} \\ \min\{ \dim{(B \cap C)} \mid C \in \FF, B \cap C \neq \{\zo_V\}\} & \textrm{if}\ A \cap B = \{\zo_V\}\end{cases}$$
Also, define $\II_A \colon \FF \setminus \FF^* \to \mathbf{2}^\FF$ by
\begin{align*}
\II_A(B) & = \begin{cases} 
\{A\} & \textrm{if}\ A \cap B \neq \{\zo_V\} \\
\{C \in \FF\ \mid \dim{(B \cap C)} = i_A(B)\} & \textrm{if}\ A \cap B = \{\zo_V\} \end{cases}
\end{align*}
In addition, define $\phi_A \colon \FF \to \mathbf{2}^{\LL(V)}$ by
\begin{align*}
\phi_A(B) & =\begin{cases} 
 \{ D \in \gbin{B}{k-i_A(B)} \mid \exists\ C \in \II_A(B)\ \textrm{with}\ D \cap C = \{\zo_V\}\} & \textrm{if}\ B \notin \FF^\star \\  \{D \mid D\ \textrm{a subspace of}\ B\} & \textrm{if}\ B \in \FF^\star. \end{cases}
 \end{align*}
 If $D \in \phi_A(B)$, then we say that $B$ \emph{claims} $D$.
 
 Finally, for $1 \leq i \leq k-1$, define
$$\FF_{i,A} = \{B \in \FF \mid i_A(B) = i\} \cup \FF^\star.$$
To avoid clutter, we will use $\FF_i$ to mean $\FF_{i,A}$.
\end{definition}
 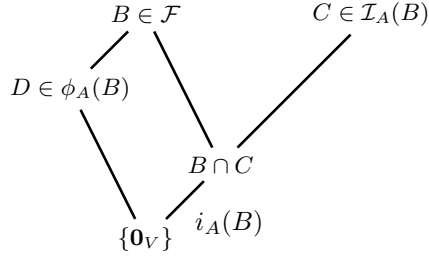
\begin{figure}[htbp]
 \begin{tikzpicture}
\small
\node (z) at (0,0){$\{\zo_V\}$};
\node (b) at (0,3){$B \in \FF$};
\node (Ib) at (3,3){$C \in \II_A(B)$};
\node(bcIb) at (1,1){$B \cap C$};
\node (phib) at (-1,2){$D \in \phi_A(B)$};
\normalsize
\path[line width=1pt] (z) edge (phib) 
(phib) edge (b)
(z) edge node[below right]{$i_A(B)$} (bcIb) 
(bcIb) edge (Ib)
(bcIb) edge (b);
\end{tikzpicture}
\caption{$\dim(B \cap C) = i_A(B)$. $C$ could be $A$ or not, but, regardless, $D \cap A = \{\zo_V\}$ for all $D \in \phi_A(B)$\label{fig:phiAB}}
\end{figure}

The following are straightforward (See Figure \ref{fig:phiAB}) and will be used often:

\begin{lem}\label{lem:prelim}
Let $\FF \subseteq \gbin{V}{k}$, and let $A,B \in \FF$. Then
\ben
\renewcommand{\theenumi}{\alph{enumi}}
\item\ If $B \notin \FF^*$ and $C \in \II_A(B)$, then $\dim(B \cap C) = i_A(B)$.
\item\ \label{lem:prelimc} If $B \not\in \FF^\star$, then $D \in \phi_A(B)$, if and only if there exists $C \in \II_A(B)$ with $C \cap D = \{\zo_V\}$, and $B = D \oplus (B \cap C)$. In particular, if $D \in \phi_A(B)$ and $\II_A(B) = \{A\}$, then $B = D \oplus (B \cap A)$.
\item\ \label{lem:prelimb} If $B \cap A = \{\zo_V\}$ and $B \cap C \neq \{\zo_V\}$ for some $C \in \FF$, then $\dim(B \cap C) \geq i_A(B)$. If, additionally, $D \in \phi(B)$ and $C \cap D = \{\zo_V\}$, then $B = D \oplus (B \cap C)$.
\item\ \label{lem:prelime} For $B\not\in \FF^\star$, and writing $i$ for $i_A(B)$, we have $\displaystyle{\size{\phi_A(B)} \geq q^{i(k-i)}}$, with equality only if  $C \cap B = C^\prime \cap B$ for all $C, C^\prime \in \II_A(B)$.
\item\ \label{lem:prelimf} For $B \in \FF^\star$, and for $1 \leq i < k$, the number of subspaces of dimension $i$ in $\phi_A(B)$ is $\gbin{k}{i} > q^{i(k-i)}$.
\renewcommand{\theenumi}{\arabic{enumi}}
\een
\end{lem}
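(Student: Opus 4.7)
The plan is to prove all five parts by directly unpacking Definition \ref{def:iIphi} and using only the dimension formula $\dim(U+W) = \dim U + \dim W - \dim(U \cap W)$ together with the skew-complement count from Lemma \ref{lem:prelimsub}\eqref{lem:subspacecount}. The authors flag these as straightforward, so the work is largely organizational rather than conceptual.

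For part (a), the two cases in the definition of $\II_A(B)$ each directly force $\dim(B \cap C) = i_A(B)$: in the case $A \cap B \neq \{\zo_V\}$ we have $\II_A(B) = \{A\}$ and $i_A(B) = \dim(A \cap B)$; in the case $A \cap B = \{\zo_V\}$ the set $\II_A(B)$ is by definition exactly the set of $C$ achieving the minimum $i_A(B)$. Parts (b) and (c) then reduce to a dimension count inside $B$: given $D \in \phi_A(B)$ with $\dim D = k - i_A(B)$ and $C \in \II_A(B)$ with $D \cap C = \{\zo_V\}$, the inclusion $D \cap (B \cap C) \subseteq D \cap C$ makes $D + (B \cap C)$ a direct sum of dimension $k$ inside the $k$-dimensional space $B$, yielding $B = D \oplus (B \cap C)$; the converse direction of the ``if and only if'' is immediate once $\dim D = k - i_A(B)$ is read off from the direct sum. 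For (c), minimality in the definition of $i_A$ gives $\dim(B \cap C) \geq i_A(B)$, and the same dimension inequality $(k - i_A(B)) + \dim(B \cap C) \geq k$ inside $B$ forces equality throughout.

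For part (d), I would fix any $C \in \II_A(B)$, set $W = B \cap C$ (dimension $i$ in $B$ by (a)), and invoke Lemma \ref{lem:prelimsub}\eqref{lem:subspacecount} on the $k$-dimensional space $B$ with the $i$-dimensional subspace $W$: this produces exactly $q^{i(k-i)}$ subspaces $D$ of $B$ of dimension $k-i$ skew to $W$, each lying in $\phi_A(B)$ since $D \cap C = D \cap W = \{\zo_V\}$. The main obstacle, and the only step requiring care, is the ``equality only if'' direction: I would show that whenever $C, C' \in \II_A(B)$ satisfy $B \cap C \neq B \cap C'$, the two subfamilies of $\phi_A(B)$ (those skew to $B \cap C$ and those skew to $B \cap C'$) must differ. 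For this I would pick $v \in (B \cap C') \setminus (B \cap C)$, extend $\langle v \rangle \oplus (B \cap C)$ to $B$, and use the extension to build a $(k-i)$-dimensional $D \subseteq B$ containing $v$ and skew to $B \cap C$; such a $D$ meets $B \cap C'$ non-trivially, witnessing the strict increase of $\size{\phi_A(B)}$ beyond $q^{i(k-i)}$.

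Finally, for part (e), $\phi_A(B)$ is the entire subspace lattice of $B$ by definition, so the number of $i$-dimensional elements is simply $\gbin{k}{i}_q$. Since $\gbin{k}{i}_q$ is a polynomial in $q$ of degree $i(k-i)$ with leading coefficient $1$ and strictly positive lower-order coefficients in the range $1 \leq i \leq k-1$, the strict inequality $\gbin{k}{i}_q > q^{i(k-i)}$ holds for every prime power $q \geq 2$, completing the proof.
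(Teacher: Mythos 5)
Your proof is correct and follows essentially the same route as the paper's: parts (a)--(c) are unpacked from the definitions and a dimension count inside $B$, part (d) counts $q^{i(k-i)}$ skew complements of $B \cap C$ in $B$ via Lemma \ref{lem:prelimsub}\eqref{lem:subspacecount}, and part (e) compares $\gbin{k}{i}_q$ to $q^{i(k-i)}$. The only place you add material is the equality case of (d), where you explicitly construct a $D$ skew to $B \cap C$ but meeting $B \cap C'$ to show the two families of complements differ; the paper simply asserts this, so your argument is a welcome expansion rather than a deviation.
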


\begin{proof} (a) is just a restatement of the definitions.
\ben
\renewcommand{\theenumi}{\alph{enumi}}
\addtocounter{enumi}{1}
\item\ Note that $D \cap (B \cap C) \subseteq D \cap C = \{\zo_V\}$, $\dim(D) =k -i_A(B)$, and $\dim (B \cap C) = i_A(B)$. 
\item\ The first part is a restatement of definitions. To see the second, we note $D \subset B$ and $\dim(B \cap C) \ge i_A(B) = k-\dim(D)$.
\item\  By Lemma \ref{lem:prelimsub}\eqref{lem:subspacecount}, each element of $\II_A(B)$ will contribute $q^{i(k-i)}$ subspaces to $\phi_A(B)$. If two elements of $\II_A(B)$ have different intersections with $B$, then these sets of $q^{i(k-i)}$ will be distinct, leading to strict inequality. 
\item\ Note that, by Lemma \ref{lem:prelimsub}\eqref{lem:subspacecount}, $q^{i(k-i)}$ is the number of subspaces of dimension $i$ that have a trivial intersection with one particular subspace of dimension $k-i$, while $\phi_A(B)$ consists of every subspace of dimension $i$. \qedhere
\renewcommand{\theenumi}{\arabic{enumi}}
\een
\end{proof}

For $1 \leq i \leq k/2$, we will establish a bound for $\size{\FF_i}+ \size{\FF_{k-i}}$ (Theorem \ref{thm:FFiFFkmi}). The process will be slightly involved, but will proceed more-or-less by bounding the number of subspaces in $\FF$ that claim a given subspace $D$. Note that if $D \in \phi_A(B)$ for $B \in \FF$, then $D \cap A = \{\zo_V\}$, and so we only consider subspaces $D$ that are skew to $A$. We begin by introducing a  notation for such subspaces.

\begin{definition} Let $\FF \subseteq \gbin{V}{k}$, and fix $A \in \FF$. Let $D$ be a subspace of $V$ with $D \cap A = \{\zo_V\}$. Define $$\phi_A^{-1}(D) = \{B \in \FF \mid D \in \phi_A(B)\}.$$
\end{definition}

Immediate from the definition is that if $B \in \phi^{-1}(D)$, then $D \subset B$. Furthermore, if $D$ is a subspace skew to $A$ and there exists $B \in (\phi^{-1}(D) \cap \FF^*$), then $B$ is the only element of $\FF$ intersecting $D$ non-trivially. In particular, if $E$ is any subspace with $E \cap D \neq \{\zo_V\}$, then $\phi^{-1}_A(E) \subset \{B\}$.
\\\\
The next lemma will show a basic intersection property for spaces in $\phi_A^{-1}(D)$ and $\phi_A^{-1}(E)$, where $D \subset E$.

\begin{lem}\label{lem:intersectsornot}
Let $\FF \subseteq \gbin{V}{k}$ be a covering-triple-free family.  Fix $A \in \FF$. Let $E$ be a subspace of $V$ with $E \cap A = \{\zo_V\}$, and let $D$ be a subspace of $E$. Assume $B_1 \in \phi_A^{-1}(E)$, and $B_2 \in \phi_A^{-1}(D)$, and suppose $B_1,B_2 \notin \FF^*$.
  Then, if $C_2 \in \II_A(B_2)$ is skew to $D$, it follows that $B_1 \cap C_2 \neq \{\zo_V\}$. 

\end{lem}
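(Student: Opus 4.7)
The plan is to argue by contradiction: assume $B_1 \cap C_2 = \{\zo_V\}$ and exhibit a covering triple inside $\FF$, contradicting the hypothesis. The candidate triple is $\{B_1, B_2, C_2\}$, with $B_2$ serving as the ``central'' subspace of the covering condition.

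The covering identity $B_2 = (B_1 \cap B_2) \oplus (B_2 \cap C_2)$ follows from three observations. First, since $B_2 \in \phi_A^{-1}(D)$, $B_2 \notin \FF^\star$, $C_2 \in \II_A(B_2)$, and $C_2 \cap D = \{\zo_V\}$, Lemma~\ref{lem:prelim} yields $B_2 = D \oplus (B_2 \cap C_2)$ with $\dim(D) = k - i_A(B_2)$ and $\dim(B_2 \cap C_2) = i_A(B_2)$. Second, $B_1 \in \phi_A^{-1}(E)$ forces $E \subseteq B_1$, and since $D \subseteq E$ we also have $D \subseteq B_1 \cap B_2$. Third, the standing assumption $B_1 \cap C_2 = \{\zo_V\}$ gives $(B_1 \cap B_2) \cap (B_2 \cap C_2) = \{\zo_V\}$, whence $\dim(B_1 \cap B_2) \leq k - i_A(B_2) = \dim(D)$; combined with the reverse inclusion, this pins down $B_1 \cap B_2 = D$, and the decomposition follows.

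What remains is to verify that $B_1, B_2, C_2$ are three distinct elements of $\FF$. Memberships are clear, since $C_2 \in \II_A(B_2) \subseteq \FF$. For distinctness: $B_1 = C_2$ would contradict $B_1 \cap C_2 = \{\zo_V\}$ (as $k \geq 2$); $B_1 = B_2$ would give $\dim(B_1 \cap C_2) = i_A(B_2) \geq 1$, again contradicting the assumption; and $B_2 = C_2$ is precluded when $D$ has positive dimension, since then $\dim(B_2 \cap C_2) = i_A(B_2) < k$. The main obstacle is precisely this last point—the argument relies on $D \neq \{\zo_V\}$, the intended regime in which the lemma will be applied (the only way $D$ could be trivial, given $B_2 \in \phi_A^{-1}(D)$ and $B_2 \notin \FF^\star$, is $B_2 = A$, which forces $C_2 = A = B_2$ and collapses the triple). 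Identifying $B_2$ (and not $B_1$ or $C_2$) as the correct central subspace is dictated by $B_1 \cap C_2 = \{\zo_V\}$, which rules out using either $B_1$ or $C_2$ in that role; beyond this choice and the dimension bookkeeping above, no further ideas are needed.
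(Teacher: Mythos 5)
Your proof is correct and follows exactly the paper's argument: assume $B_1 \cap C_2 = \{\zo_V\}$, use the decomposition $B_2 = D \oplus (B_2 \cap C_2)$ together with $D \subseteq E \subseteq B_1$ to exhibit $B_1, B_2, C_2$ as a covering triple with $B_2$ as the center. You go beyond the paper's one-line proof by explicitly verifying distinctness of the three subspaces and flagging the $D = \{\zo_V\}$ degeneracy, which the paper leaves implicit (its subsequent applications always have $\dim D \geq 1$).
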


\begin{proof}

Note first that $D \subseteq E \subseteq B_1$. If $B_1 \cap C_2 = \{\zo_V\}$, then since $B_2 = D \oplus (B_2 \cap C_2)$ (by lemma \ref{lem:prelim}(\ref{lem:prelimc}) if $B$ intersects $A$ non-trivially, and (\ref{lem:prelimb}) otherwise), we have that $B_2,B_1,C_2$ form a covering triple.\qedhere

 \end{proof}

The next result shows, essentially, that all elements of $\phi^{-1}(D)$ share a common form; that is, there exists $C \in \FF$ such that $B = D \oplus (B \cap C)$ for all $B \in \phi^{-1}(D)$. In actuality, we show something slightly different, involving both $D$ and another subspace $E$ containing $D$, since we are trying to bound $\FF_i$ and $\FF_{k-i}$ simultaneously.

\begin{prop}\label{prop:CstarforDE}
Let $\FF \subset \gbin{V}{k}$ be a covering-triple-free family, and suppose $k/2 \le e \le k-1$. Fix $A \in \FF$, let $E \in \gbin{V - A}{e}$, and suppose either $D = E$ or  $D \in \gbin{E}{k-e}$. Suppose $\phi_A^{-1}(E)$ is non-empty but $\phi_A^{-1}(E) \cap \FF^* = \emptyset$. Then there exists $C \in \FF$ such that $B = D \oplus (B \cap C)$ for all $B \in \phi_A^{-1}(D)$.
\end{prop}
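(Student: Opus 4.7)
The strategy is to fix some $B_1 \in \phi_A^{-1}(E)$ (non-empty by hypothesis), use Lemma \ref{lem:prelim}\eqref{lem:prelimc} to choose $C_1 \in \II_A(B_1)$ with $B_1 = E \oplus (B_1 \cap C_1)$ (so $\dim(B_1 \cap C_1) = k - e$), and then argue that this $C_1$ serves as the desired $C$. A preliminary observation is that $\phi_A^{-1}(D) \cap \FF^\star = \emptyset$, since any $B$ in this intersection would, by the remarks preceding Lemma \ref{lem:intersectsornot}, force $\phi_A^{-1}(E) \subseteq \{B\} \subseteq \FF^\star$, contradicting the hypothesis.

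Fix $B \in \phi_A^{-1}(D)$. Because $D \subseteq B$ and $D \cap C_1 \subseteq E \cap C_1 = \{\zo_V\}$, it suffices to show $\dim(B \cap C_1) = k - \dim D$. I would case-split on whether each of $B_1 \cap A$ and $B \cap A$ is trivial. The sub-case where both are non-trivial is immediate: then $C_1 = A$ and $\II_A(B) = \{A\}$, so Lemma \ref{lem:prelim}\eqref{lem:prelimc} directly gives $B = D \oplus (B \cap A) = D \oplus (B \cap C_1)$. In each of the remaining three sub-cases, the plan is to derive either a contradiction or the desired inequality from the existence of a covering triple inside $\FF$; identifying these triples will be the main obstacle.

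Concretely, suppose first that both $B_1 \cap A = \{\zo_V\}$ and $B \cap A = \{\zo_V\}$. I would argue that $B \cap C_1 \neq \{\zo_V\}$: otherwise the projection $B_1 \to E$ along $B_1 \cap C_1$ would restrict injectively to $B \cap B_1$, forcing $\dim(B \cap B_1) \leq e$, while the lower bound $\dim(B \cap B_1) \geq \max(\dim D, i_A(B)) = e$ (using $D \subseteq B \cap B_1 \neq \{\zo_V\}$, the minimality of $i_A(B) = k - \dim D$ over non-trivial intersections of $B$, and $e \geq k/2$) would force equality, exhibiting the covering triple $B_1 = (B_1 \cap B) \oplus (B_1 \cap C_1)$ inside $\FF$. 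Non-triviality of $B \cap C_1$ then gives $\dim(B \cap C_1) \geq k - \dim D$ by the same minimality, matching the upper bound from $D \oplus (B \cap C_1) \subseteq B$. In the mixed sub-case with $B_1 \cap A \neq \{\zo_V\}$ and $B \cap A = \{\zo_V\}$, the subspaces $B_1 \cap A$ (dimension $k - e$) and $B_1 \cap B$ are skew inside $B_1$ because $(B_1 \cap A) \cap B \subseteq A \cap B = \{\zo_V\}$, giving $\dim(B_1 \cap B) \leq e$; combined with the same lower bound $\geq e$, this makes $B_1 = (B_1 \cap A) \oplus (B_1 \cap B)$ a covering triple, a contradiction. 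Symmetrically, in the mixed sub-case with $B_1 \cap A = \{\zo_V\}$ and $B \cap A \neq \{\zo_V\}$, we have $B = D \oplus (B \cap A)$ with $\dim(B \cap A) = k - \dim D$; now $(B \cap A) \cap B_1 = \{\zo_V\}$ makes $B \cap A$ and $B \cap B_1$ skew inside $B$, bounding $\dim(B \cap B_1) \leq \dim D$, which combined with $D \subseteq B \cap B_1$ makes $B = (B \cap A) \oplus (B \cap B_1)$ the required covering triple.
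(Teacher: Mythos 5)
Your proof is correct and follows essentially the same approach as the paper: fix $B_1 \in \phi_A^{-1}(E)$, pick $C_1 \in \II_A(B_1)$ skew to $E$, and show this $C_1$ works by deriving covering triples when it doesn't. The organizational difference is that the paper case-splits on whether $B \cap A$ is trivial and whether $D = E$, and outsources the key intersection facts to Lemma~\ref{lem:intersectsornot}, whereas you case-split symmetrically on whether each of $B_1 \cap A$ and $B \cap A$ is trivial, and inline the covering-triple constructions (e.g.\ $B_1 = (B_1 \cap B) \oplus (B_1 \cap C_1)$ and $B = (B \cap A) \oplus (B \cap B_1)$) rather than citing that lemma. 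Since Lemma~\ref{lem:intersectsornot} is itself proved by exactly such a covering-triple construction, the two proofs are doing the same work; yours arguably makes the source of the contradiction a bit more visible, at the cost of treating two mixed sub-cases that the paper shows are vacuous with a single lemma invocation. One minor presentational point: in the $(+-)$ and $(-+)$ sub-cases you say the covering triple is ``required''; it would be clearer to state explicitly that those sub-cases are impossible because $\FF$ is covering-triple-free.
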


\begin{proof}
We note first that since $\phi_A^{-1}(E) \cap \FF^* = \emptyset$, we must have $\phi_A^{-1}(D) \cap \FF^* = \emptyset$, since $D$ is contained in $E$. Let $B^* \in \phi_A^{-1}(E)$, and let $C^* \in \II_A(B^*)$ be such that $C^* \cap E = \emptyset$. Letting $B \in \phi_A^{-1}(D)$, we claim that setting $C = C^*$ will suffice. The proof will be divided into cases, depending on whether or not $B \cap A = \{\zo_V\}$ and whether $D=E$ or $D \in \gbin{E}{k-e}$.
\\\\
{\bfseries Case 1:} Suppose $B \cap A \neq \{\zo_V\}$. Then, $\II_A(B) = \{A\}$, and since $D \subset E$, by Lemma \ref{lem:intersectsornot}, $B^* \cap A \neq \{\zo_V\}$. Thus, $C^* = A$, so by Lemma \ref{lem:prelim}\eqref{lem:prelimc} we have $B = D \oplus (B \cap C^*)$.
\\\\
{\bfseries Case 2(a):} Suppose $B \cap A = \{\zo_V\}$ and $D = E$. By Lemma \ref{lem:intersectsornot}, we know that $B \cap C^* \neq \{\zo_V\}$. Since $D \cap C^* = \{\zo_V\}$, we conclude by Lemma \ref{lem:prelim}\eqref{lem:prelimb} that $B = D \oplus (B \cap C^*)$.
\\\\
{\bfseries Case 2(b):} Suppose $B \cap A = \{\zo_V\}$ and $D \in \gbin{E}{k-e}$. Since $B \cap B^* \neq \{\zo_V\}$, by Lemma \ref{lem:prelim}\eqref{lem:prelimb} we know $\dim(B^* \cap B) \ge i_A(B) = k-\dim(B^* \cap C^*)$. If we had $B \cap C^* = \{\zo_V\}$, then $B^*,C^*,B$ would form a covering triple, so we know $B \cap C^* \neq \{\zo_V\}$. Since $D \cap C^* \subset E \cap C^* = \{\zo_V\}$, we conclude by Lemma \ref{lem:prelim}\eqref{lem:prelimb} that $B = D \oplus (B \cap C^*)$.
\end{proof}

Proposition \ref{prop:CstarforDE} allows us then to prove the following.

\begin{prop}\label{prop:phiinverseDED}
Let $2 \le k \le n/2$, and suppose $\FF \subseteq \gbin{V}{k}$ is a covering-triple-free family.   Fix $A \in \FF$, and let $1 \le d \le k/2$. Suppose $D \in \gbin{V - A}{d}$, and suppose that every element of $\EE_D \subset \gbin{V -A}{k-d}$ contains $D$. If $|\EE_D| \ge \begin{cases} q^{k-d} & \textrm{if}\ d < k/2 \\ 1 & \textrm{if}\ d = k/2 \end{cases}$, then
\begin{equation}
	\size{\phi_A^{-1}(D)} + \sum_{E \in \EE_D} \size{\phi_A^{-1}(E)} \leq \gbin{k-1}{d}_q + \size{\EE_D} \gbin{k-1}{d-1}_q. \label{eq:phiinvDE}
\end{equation}
Moreover, if Inequality \eqref{eq:phiinvDE} is an equality, then $\size{\phi_A^{-1}(D)} \geq \displaystyle{\gbin{k-1}{d}_q}$.\end{prop}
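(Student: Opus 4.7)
The plan is to use Proposition~\ref{prop:CstarforDE} to collect $\phi_{A}^{-1}(D)$ and a carefully chosen $\phi_{A}^{-1}(E^{*})$ inside a single $k$-dimensional space $C^{*} \in \FF$, reducing the joint bound to Corollary~\ref{cor:EKRtypenonuniform} and each remaining $\phi_{A}^{-1}(E)$ to Theorem~\ref{thm:vsEKR}. Degenerate cases (an element of $\FF^{\star}$ intersects the relevant $\phi_{A}^{-1}$'s, or no $E \in \EE_{D}$ has $\phi_{A}^{-1}(E) \neq \emptyset$) get separate direct treatment: a single $B^{0} \in \FF^{\star}$ containing $D$ forces $\size{\phi_{A}^{-1}(D)} = 1$ and each $\size{\phi_{A}^{-1}(E)} \leq 1$; in the other degeneracy, one bounds $\size{\phi_{A}^{-1}(D)}$ via Proposition~\ref{prop:CstarforDE} applied to an auxiliary $E' \notin \EE_{D}$ (or a direct analysis).

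\textbf{Step~1: individual bound.} For each $E \in \EE_{D}$ with $\phi_{A}^{-1}(E) \neq \emptyset$ and disjoint from $\FF^{\star}$, Proposition~\ref{prop:CstarforDE} applied with both ``$D$'' and ``$E$'' equal to $E$ yields $C_{E} \in \FF$ such that $B = E \oplus (B \cap C_{E})$ for all $B \in \phi_{A}^{-1}(E)$. The map $B \mapsto B \cap C_{E}$ injects into $\gbin{C_{E}}{d}$. If two images were skew, then since $B_{1}, B_{2} \subseteq E + C_{E}$, writing any $v \in B_{1} \cap B_{2}$ in two ways and using $E \cap C_{E} = \{\zo_{V}\}$ forces $B_{1} \cap B_{2} = E$, so $B_{1}, B_{2}, C_{E}$ would be a covering triple---contradiction. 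Hence the image is an intersecting family in $C_{E}$ and Theorem~\ref{thm:vsEKR} (inside the $k$-dimensional space $C_{E}$, valid because $d \leq k/2$) delivers $\size{\phi_{A}^{-1}(E)} \leq \gbin{k-1}{d-1}_{q}$.

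\textbf{Step~2: joint bound and assembly.} Pick $E^{*} \in \EE_{D}$ maximizing $y^{*} := \size{\phi_{A}^{-1}(E^{*})}$ (necessarily of the ``good'' type from Step~1 whenever $y^{*} \geq 2$). Fix $B^{*} \in \phi_{A}^{-1}(E^{*})$ and $C^{*} \in \II_{A}(B^{*})$ with $C^{*} \cap E^{*} = \{\zo_{V}\}$. Applying Proposition~\ref{prop:CstarforDE} with this same $(B^{*}, C^{*})$ to the pairs $(D, E^{*})$ and $(E^{*}, E^{*})$ simultaneously gives $B = D \oplus (B \cap C^{*})$ on $\phi_{A}^{-1}(D)$ and $B = E^{*} \oplus (B \cap C^{*})$ on $\phi_{A}^{-1}(E^{*})$. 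The analogous covering-triple argument (now spanning both layers: any cross-pair $B_{1} \in \phi_{A}^{-1}(D)$, $B_{2} \in \phi_{A}^{-1}(E^{*})$ with skew $C^{*}$-intersections forces $B_{1} \cap B_{2} = D$ and hence a covering triple $B_{1}, B_{2}, C^{*}$) shows that
\[
\GG_{k-d} \cup \GG_{d} := \{B \cap C^{*} : B \in \phi_{A}^{-1}(D)\} \cup \{B \cap C^{*} : B \in \phi_{A}^{-1}(E^{*})\}
\]
is intersecting inside the $k$-dimensional $C^{*}$. Corollary~\ref{cor:EKRtypenonuniform} applied in $C^{*}$ with $\alpha = q^{k-d}$ (or $\alpha = 1$ when $d = k/2$) then yields
\begin{equation*}
\alpha\, y^{*} + \size{\phi_{A}^{-1}(D)} \;\leq\; \alpha \gbin{k-1}{d-1}_{q} + \gbin{k-1}{d}_{q}.
\end{equation*}
Combining with $\sum_{E \in \EE_{D}} \size{\phi_{A}^{-1}(E)} \leq \size{\EE_{D}}\, y^{*}$ (from maximality of $y^{*}$), and invoking $\size{\EE_{D}} \geq \alpha$ together with $y^{*} \leq \gbin{k-1}{d-1}_{q}$ from Step~1, we obtain
\begin{equation*}
\size{\phi_{A}^{-1}(D)} + \sum_{E \in \EE_{D}} \size{\phi_{A}^{-1}(E)} \;\leq\; \gbin{k-1}{d}_{q} + \alpha \gbin{k-1}{d-1}_{q} + (\size{\EE_{D}} - \alpha)\, y^{*} \;\leq\; \gbin{k-1}{d}_{q} + \size{\EE_{D}} \gbin{k-1}{d-1}_{q}.
\end{equation*}
Equality propagates back to the Corollary step, whose own ``moreover'' clause then hands us $\size{\phi_{A}^{-1}(D)} \geq \gbin{k-1}{d}_{q}$. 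The main obstacle is verifying the two covering-triple contradictions cleanly---particularly that a \emph{single} $C^{*}$ really serves for both $\phi_{A}^{-1}(D)$ and $\phi_{A}^{-1}(E^{*})$, which is what allows Corollary~\ref{cor:EKRtypenonuniform} to land on a single intersecting family---and the bookkeeping required for the $\FF^{\star}$/degenerate cases to ensure the ``$y_{E} \leq y^{*}$'' reduction is unconditional.
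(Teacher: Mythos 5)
Your proposal follows essentially the same strategy as the paper's proof: pick $E^{*}\in\EE_{D}$ maximizing $\size{\phi_{A}^{-1}(E^{*})}$, use Proposition~\ref{prop:CstarforDE} (applied with the same $B^{*},C^{*}$ to both $D$ and $E^{*}$) to obtain a single $C^{*}$ decomposing every member of $\phi_{A}^{-1}(D)\cup\phi_{A}^{-1}(E^{*})$, show via the covering-triple-free property that $\GG_{d}\cup\GG_{k-d}$ is intersecting inside $C^{*}$, and finish with Corollary~\ref{cor:EKRtypenonuniform}; your only deviation is bookkeeping, since the paper plugs $\alpha=\size{\EE_{D}}$ directly into the Corollary (using the hypothesis $\size{\EE_{D}}\ge q^{k-d}$, resp.\ $1$), which renders your Step~1 and the separate estimate $y^{*}\le\gbin{k-1}{d-1}_{q}$ unnecessary.
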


\begin{proof}
Let $E \in \EE_D$. If any elements of $\phi_A^{-1}(D)$ or $\phi_A^{-1}(E)$ are in $\FF^*$, then $\size{\phi_A^{-1},(D)}$ and $\size{\phi_A^{-1}(E)}$ are both at most $1$, and the inequality is trivially satisfied. So assume no elements of $\phi_A^{-1}(D)$ or $\phi_A^{-1}(E)$ (for any $E \in \EE_D$) are in $\FF^\star$. 

From among elements of $\EE_D$, let $E$ be one with $\size{\phi_A^{-1}(E)}$ as large as possible. By Proposition \ref{prop:CstarforDE}, there exists $C^\star \in \FF$ with $C^\star \cap E = \{\zo_V\}$, $B = E \oplus (C^\star \cap B)$ for all $B \in \phi_A^{-1}(E)$ \emph{and} $B^\prime = D \oplus (C^\star \cap B^\prime)$ for all $B^\prime \in \phi_A^{-1}(D)$.

Let $\GG_d = \{ C^\star \cap B \mid B \in \phi_A^{-1}(E)\}$, and, likewise, $\GG_{k-d} = \{ C^\star \cap B^\prime \mid B^\prime \in \phi_A^{-1}(D)\}$. $\GG_d$ consists of $\size{\phi_A^{-1}(E)}$ subspaces of dimension $d$ of $C^\star$. Likewise,  $\GG_{k-d}$ consists of $\size{\phi_A^{-1}(D)}$ subspaces of dimension $k-d$ of $C^\star$. Now, let $B_1 , B_2 \in \phi_A^{-1}(E) \cup \phi_A^{-1}(D)$, and assume without loss of generality that if at least one $B_i$ is in $\phi_A^{-1}(E)$, then $B_1 \in \phi_A^{-1}(E)$. Under this assumption, we always have $B_2 = (B_2 \cap C^*) + (B_2 \cap B_1)$. Since $\FF$ is covering-triple-free, we must have $C^\star \cap B_1 \cap B_2 \neq \{\zo_V\}$. In particular, $C^\star \cap B_1$ intersects $C^\star \cap B_2$ non-trivially. Hence, $\GG_d \cup \GG_{k-d}$ is an intersecting family of subspaces of $C^\star$. 

    Corollary \ref{cor:EKRtypenonuniform} now applies, and we get
    \begin{equation}\size{\EE_D}\size{\GG_d} + \size{\GG_{k-d}} \leq \size{\EE_D} \gbin{k-1}{d-1}_q + \gbin{k-1}{d}_q. \label{eq:Thm4toRescue}
    \end{equation}
    Inequality \eqref{eq:phiinvDE} now follows since $\sum_{E \in \EE_D} \size{\phi_A^{-1}(E)} \leq \size{\EE_D}\size{\GG_d}$ and $\size{\phi_A^{-1}(D)} = \size{\GG_{k-d}}$. If Inequality \eqref{eq:phiinvDE} is an equality, then we have equality in Inequality \eqref{eq:Thm4toRescue}, and by Corollary \ref{cor:EKRtypenonuniform}, we have $\size{\phi_A^{-1}(D)} = \size{\GG_{k-d}} \geq \gbin{k-1}{d}_q$.
\end{proof}

To be able to control $\size{\FF_i} + \size{\FF_{k-i}}$, for $1 \leq i \le k/2$, we need a well-known generalization of the Marriage Theorem. We include the proof for completeness.

\begin{thm}\label{thm:normalizedmatching}
Let $G = (X, \Delta, Y)$ be a bipartite graph with vertex set $X \cup Y$, edge set $\Delta$, and with all edges having one end in $X$ and the other in $Y$. Assume that there is a positive integer $m$ such that, for all $S \subseteq X$, $\size{N_Y(S)} \geq m \size{S}$. ($N_Y(S)$ is the set of neighbors in $Y$ of the vertices in $S$.) Then there exists a one to $m$ matching between the vertices in $X$ and a subset of vertices in $Y$. 
\end{thm}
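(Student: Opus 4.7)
The plan is to reduce the statement to the classical Hall's Marriage Theorem via the standard vertex-duplication trick. Specifically, I would build a new bipartite graph $G' = (X', \Delta', Y)$ where $X' = X \times \{1, \ldots, m\}$, so that each $x \in X$ is replaced by $m$ formal copies $x_1, \ldots, x_m$, and each copy $x_i$ inherits the same neighborhood as $x$ in $Y$. A perfect matching from $X'$ to a subset of $Y$ in $G'$ is precisely a one-to-$m$ matching from $X$ to a subset of $Y$ in $G$, so it is enough to verify Hall's condition for $G'$.

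Next I would check Hall's condition: for every $S' \subseteq X'$, we need $|N_Y(S')| \geq |S'|$. Let $S \subseteq X$ be the projection of $S'$ (i.e., the set of $x \in X$ with at least one copy in $S'$). Since all copies of $x$ share the neighborhood of $x$, we have $N_Y(S') = N_Y(S)$. Moreover $|S'| \leq m|S|$, and by the hypothesis on $G$, $|N_Y(S)| \geq m|S|$. Combining these yields
\[
|N_Y(S')| = |N_Y(S)| \geq m|S| \geq |S'|,
\]
so Hall's condition holds for $G'$. Invoking the classical Marriage Theorem then produces a perfect matching from $X'$ into $Y$, which unwinds into a one-to-$m$ matching from $X$ into $Y$, as desired.

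There is no real obstacle here; the only thing to be careful about is the bookkeeping that Hall's condition for the blown-up graph follows from the weighted condition for the original graph, and the slightly delicate point that the inequality $|S'| \leq m|S|$ is needed in the direction shown (and is tight exactly when $S'$ contains all copies of every element of $S$). If one preferred to avoid the duplication, an essentially equivalent proof via the max-flow/min-cut theorem would work: build a network with a source of capacity $m$ into each $x \in X$, unit capacities on all edges of $\Delta$, and unit capacities from each $y \in Y$ to a sink, and check that the hypothesis forces every $s$-$t$ cut to have value at least $m|X|$. Either route is entirely standard, and I would choose the Hall reduction for brevity.
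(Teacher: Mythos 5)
Your proof is correct and follows essentially the same route as the paper's: duplicate each vertex of $X$ into $m$ clones sharing the same neighborhood, check Hall's condition on the blown-up graph, apply the Marriage Theorem, and pull the matching back. You spell out the Hall verification that the paper leaves implicit, but the argument is the same.
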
 

\begin{proof}
Let the set $\tilde{X}$ consists of $m$ copies of $X$.  Construct a new bipartite graph $\tilde{G} = (\tilde{X}, \tilde{\Delta}, Y)$ where each clone of $x \in X$ is adjacent to exactly the neighbors of $x$ in the original graph $G$. The new graph $\tilde{G}$ satisfies the Hall Marriage condition and so there is a one to one matching from elements of $X$ to a subset of elements of $Y$. The pull-back of this matching in $G$ is a one to $m$ matching from $X$ to a subset of vertices in $Y$.
\end{proof}

We will now use Theorem \ref{thm:normalizedmatching} to control $\size{\FF_i} + \size{\FF_{k-i}}$ when $1 \le i \le k/2$. 
\begin{thm}\label{thm:FFiFFkmi}
Let $2 \leq k \leq n/2$, $\FF \subseteq \gbin{V}{k}$ a covering-triple-free family, and fix $A \in \FF$. Then, for $1 \leq i \leq k/2$,
\begin{equation}
\size{\FF_i} + \size{\FF_{k-i}} \leq q^{(k-i)^2}\gbin{n-k}{k-i}_q \gbin{k-1}{i-1}_q + q^{i^2} \gbin{n-k}{i}_q \gbin{k-1}{i}_q.\label{eq:sizeFiFk-i}
\end{equation}
If Inequality \eqref{eq:sizeFiFk-i} is an equality, then $\size{\phi_A^{-1}(D)} \geq \displaystyle{\gbin{k-1}{i}_q}$ for all $D \in \displaystyle{\gbin{V - A}{i}}$, $\FF^\star = \emptyset$, and $\size{\phi_A(B)} = q^{i(k-i)}$ for all $B \in \FF_i \cup \FF_{k-i}$. 
\end{thm}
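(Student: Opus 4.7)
My plan is to recast the bound as a double-counting inequality and then apply Proposition~\ref{prop:phiinverseDED} term-by-term, using Theorem~\ref{thm:normalizedmatching} to produce a carefully balanced assignment. Set $N_i := \size{\gbin{V-A}{i}} = q^{ki}\gbin{n-k}{i}_q$ and $N_{k-i} := \size{\gbin{V-A}{k-i}} = q^{k(k-i)}\gbin{n-k}{k-i}_q$ via Lemma~\ref{lem:prelimsub}\eqref{lem:subspacecount}. By Lemma~\ref{lem:prelim}\eqref{lem:prelime},\eqref{lem:prelimf}, every $B\in\FF_i$ has $\size{\phi_A(B)}\ge q^{i(k-i)}$ with $\phi_A(B)\subset\gbin{V-A}{k-i}$ (and symmetrically for $\FF_{k-i}$; when $B\in\FF^\star$ one uses $\gbin{k}{i}_q\ge q^{i(k-i)}$). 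A double count therefore gives $q^{i(k-i)}(\size{\FF_i}+\size{\FF_{k-i}}) \le \sum_D \size{\phi_A^{-1}(D)} + \sum_E \size{\phi_A^{-1}(E)}$, where $D$ ranges over $\gbin{V-A}{i}$ and $E$ over $\gbin{V-A}{k-i}$. Dividing by $q^{i(k-i)}$ reduces the theorem to proving
\begin{equation}\label{eq:starplan}
\sum_{D}\size{\phi_A^{-1}(D)} + \sum_E \size{\phi_A^{-1}(E)} \le N_i\gbin{k-1}{i}_q + N_{k-i}\gbin{k-1}{i-1}_q.
\end{equation}

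To obtain \eqref{eq:starplan}, I plan to construct a function $\mu\colon\gbin{V-A}{k-i}\to\gbin{V-A}{i}$ with $\mu(E)\subset E$ for every $E$, and with $\size{\mu^{-1}(D)}\ge M$ for every $D$, where $M=q^{k-i}$ if $i<k/2$ and $M=1$ if $i=k/2$. Given such a $\mu$, the choice $\EE_D:=\mu^{-1}(D)$ satisfies the hypothesis of Proposition~\ref{prop:phiinverseDED} for each $D$. Summing the resulting inequality over $D$ yields \eqref{eq:starplan} immediately: the family $\{\mu^{-1}(D)\}_D$ partitions $\gbin{V-A}{k-i}$, so $\sum_D\sum_{E\in\EE_D}\size{\phi_A^{-1}(E)}=\sum_E\size{\phi_A^{-1}(E)}$ and $\sum_D\size{\EE_D}=N_{k-i}$.

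To build $\mu$ I plan to invoke Theorem~\ref{thm:normalizedmatching} on the bipartite graph with parts $\gbin{V-A}{i}$ and $\gbin{V-A}{k-i}$ and inclusion edges. Every $D$ has exactly $\beta:=q^{k(k-2i)}\gbin{n-k-i}{k-2i}_q$ neighbors (apply Lemma~\ref{lem:prelimsub}\eqref{lem:subspacecount} inside $V/D$), and every $E$ has exactly $\gbin{k-i}{i}_q$ neighbors. A one-line double count of edges incident to any $S\subset\gbin{V-A}{i}$ gives $\size{N(S)}\ge\bigl(\beta/\gbin{k-i}{i}_q\bigr)\size{S}$. A short $q$-binomial check — using $q^{k(k-2i)-(k-i)}\ge 1$ for $i<k/2$ together with the monotonicity $\gbin{n-k-i}{k-2i}_q\ge\gbin{k-i}{k-2i}_q=\gbin{k-i}{i}_q$ (valid since $n\ge 2k$) — verifies $\beta/\gbin{k-i}{i}_q\ge M$. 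Theorem~\ref{thm:normalizedmatching} then yields a one-to-$M$ matching from $\gbin{V-A}{i}$ into $\gbin{V-A}{k-i}$, and I extend it to $\mu$ by routing each still-unmatched $E$ to any $D\subset E$ in $\gbin{V-A}{i}$.

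For the equality statement, equality in the target inequality forces equality at every intermediate step. Lemma~\ref{lem:prelim}\eqref{lem:prelimf} gives $\gbin{k}{i}_q>q^{i(k-i)}$, so any $B\in\FF^\star$ would create a strict inequality in the double count; hence $\FF^\star=\emptyset$. Equality in Lemma~\ref{lem:prelim}\eqref{lem:prelime} for every $B\in\FF_i\cup\FF_{k-i}$ then forces $\size{\phi_A(B)}=q^{i(k-i)}$ throughout, and equality in each application of Proposition~\ref{prop:phiinverseDED} triggers its ``furthermore'' clause, yielding $\size{\phi_A^{-1}(D)}\ge\gbin{k-1}{i}_q$ for every $D\in\gbin{V-A}{i}$. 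The main obstacle I foresee is the degree inequality $\beta/\gbin{k-i}{i}_q\ge M$: making the $q$-binomial estimate fully uniform in $i$, and cleanly treating the boundary case $i=k/2$, where $\beta=\gbin{k-i}{i}_q=M=1$ so the matching becomes trivial but Proposition~\ref{prop:phiinverseDED}'s hypothesis switches.
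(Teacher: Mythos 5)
Your proposal reproduces the paper's argument essentially verbatim: the same bipartite inclusion graph between $\gbin{V-A}{i}$ and $\gbin{V-A}{k-i}$, the same degree computations and $q$-binomial estimate $\beta/\gbin{k-i}{i}_q\geq m$ (using $n\geq 2k$ and $(k-2i)k\geq k-i$), the same appeal to Theorem~\ref{thm:normalizedmatching} followed by arbitrary assignment of unmatched $E$'s to build the partition $\{\EE_D\}$, and the same double-count over pairs $(B,D)$ with $D\in\phi_A(B)$ to pass from $\sum_D\size{\phi_A^{-1}(D)}+\sum_E\size{\phi_A^{-1}(E)}$ to $\size{\FF_i}+\size{\FF_{k-i}}$, followed by the identical equality analysis via Lemma~\ref{lem:prelim}\eqref{lem:prelime}--\eqref{lem:prelimf} and the ``furthermore'' clause of Proposition~\ref{prop:phiinverseDED}. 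The only cosmetic difference is the order of presentation (you set up the double count before the matching, the paper after) and that the paper explicitly flags the multiset convention for $\Ss$ when $i=k/2$.
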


\begin{proof}
Construct a bipartite graph with vertex set $X \cup Y$, where $X = \gbin{V - A}{i}$ and $Y = \gbin{V - A}{k-i}$. As for edges, $D \in X$ is adjacent to $E \in Y$ if $D \subseteq E$. 

The $(k-i)$-dimensional subspaces of $V$ that contain $D$ and are skew to $A$ are in one to one correspondence with $(k-2i)$-dimensional subspaces of the $(n-i)$-dimensional vector space $V/D$ that are skew to the $k$-dimensional subspace $(A+D)/D$. By Lemma \ref{lem:prelimsub}(\ref{lem:subspacecount}), their number---which is the degree of the vertex $D \in X$---is $q^{(k-2i)k} \gbin{n-k-i}{k-2i}_q$. Likewise, the degree of a vertex $E \in Y$---that is, the number of $i$ dimensional subspaces of an $k-i$ dimensional vector space---is $\displaystyle{\gbin{k-i}{i}_q = \gbin{k-i}{k-2i}_q}$. As a result, for $S \subseteq X$, $\displaystyle{\size{N_Y(S)} \geq \frac{\size{S} q^{(k-2i)k}\gbin{n-k-i}{k-2i}_q}{\gbin{k-i}{k-2i}_q}}$. Now, since $k \leq n/2$, $n-k-i\geq k-i$  we have $\displaystyle{\frac{q^{(k-2i)k}\gbin{n-k-i}{k-2i}_q}{\gbin{k-i}{k-2i}_q} \geq q^{(k-2i)k}}$. Let $m = \begin{cases} q^{k-i} & \textrm{if}\ i < k/2 \\ 1 & \textrm{if}\ i = k/2 \end{cases}$. Since $i \le k/2$ and $k \ge k-i$, we have $\displaystyle{\size{N_Y(S)} \geq m \size{S}}$. By Theorem \ref{thm:normalizedmatching}, there is a one to $m$ matching from elements of $X$ to a subset of $Y$. Arbitrarily match each unmatched element of $Y$ to one of its subsets in $X$. Denote by $\EE_D$, the elements in $Y$ that have been matched with $D \in X$. Now, $\{ \EE_D \mid D \in X\}$ is a partition of $Y$ into $\size{X}$ parts, and, for all $D \in X$, $\size{\EE_D} \ge m$. We can now apply Proposition \ref{prop:phiinverseDED}, and get
\begin{align*}
\sum_{D \in X} \size{\phi_A^{-1}(D)} + \sum_{E \in Y} \size{\phi_A^{-1}(E)} & = \sum_{D \in X} \left(\size{\phi_A^{-1}(D)} + \sum_{E \in \EE_D} \size{\phi_A^{-1}(E)}\right) \\
& \leq  \sum_{D \in X} \left(\gbin{k-1}{i}_q + \size{\EE_D} \gbin{k-1}{i-1}_q\right) \\
& = \gbin{k-1}{i}_q \size{X} + \gbin{k-1}{i-1}_q \size{Y} \\
& = q^{ki} \gbin{n-k}{i}_q\gbin{k-1}{i}_q + q^{k(k-i)} \gbin{n-k}{k-i}_q \gbin{k-1}{i-1}_q.
\end{align*}
To complete the proof, we will use a double counting argument to get the desired bound on $\size{\FF_i} + \size{\FF_{k-i}}$.

Let $\Ss = \{(B, D) \mid B \in \FF_i, D \in \phi_A(B), \dim(D) = k-i\} \cup \{(B, D) \mid B \in \FF_{k-i}, D \in \phi_A(B), \dim(D) = i\}$. As a technical note, if $i = k/2$, we allow $\Ss$ to be a multi-set and count each element twice. Now, for each $B \in \FF_i \cup \FF_{k-i}$, there are at least $q^{i(k-i)}$ suitable $D$'s (Lemma \ref{lem:prelim}\eqref{lem:prelime}\&\eqref{lem:prelimf}), and so $\size{\Ss} \geq (\size{\FF_i} + \size{\FF_{k-i}})q^{i(k-i)}$. On the other hand, the $D$'s are in $X \cup Y$, and, for each $D \in X \cup Y$, the number of $(B, D) \in \Ss$ is $\size{\phi^{-1}_A(D)}$. Hence, $\size{\Ss} = \sum_{D \in X} \size{\phi_A^{-1}(D)} + \sum_{E \in Y} \size{\phi_A^{-1}(E)} \leq q^{ki} \gbin{n-k}{i}_q\gbin{k-1}{i}_q + q^{k(k-i)} \gbin{n-k}{k-i}_q \gbin{k-1}{i-1}_q$, by the earlier calculation. Putting the two inequalities together, we get
$$\size{\FF_i} + \size{\FF_{k-i}} \leq \frac{\size{\Ss}}{q^{i(k-i)}} \leq q^{i^2} \gbin{n-k}{i}_q\gbin{k-1}{i}_q + q^{(k-i)^2} \gbin{n-k}{k-i}_q \gbin{k-1}{i-1}_q,$$
as desired. 

If Inequality \eqref{eq:sizeFiFk-i} is an equality, then we have to also have equality in Inequality \eqref{eq:phiinvDE} of Proposition \ref{prop:phiinverseDED}, and so $\size{\phi_A^{-1}(D)} \geq \displaystyle{\gbin{k-1}{i}_q}$ for all $D \in \displaystyle{\gbin{V - A}{i}}$. Furthermore, to have equality, in the count for $\size{S}$, for each $B \in \FF_i \cup \FF_{k-i}$, we need exactly $q^i(k-i)$ suitable $D$'s. For $B \not\in \FF^\star$, this means that $\size{\phi_A(B)} = q^{i(k-i)}$, and Lemma \ref{lem:prelim}\eqref{lem:prelimf} implies that none of the elements of $\FF$ are in $\FF^\star$. The proof is now complete.  \qedhere
\end{proof}

We are now ready to prove our main result, Theorem A of the introduction.

\begin{thm}\label{thm:Main}
Let $n$ be a positive integer, $q$ a prime power,  $V$ a vector space of dimension $n$ over $\F_q$, $2 \leq k \leq n/2$, and $\FF \subseteq \displaystyle{\gbin{V}{k}}$  a covering-triple-free family. Then 
\begin{equation}
\size{\FF} \leq \gbin{n-1}{k-1}_q. \label{eq:finalresult}
\end{equation}
Moreover, for $2 \leq k < n/2$, if $\size{\FF} = \displaystyle{\gbin{n-1}{k-1}_q}$, then $\FF$ is a star.
\end{thm}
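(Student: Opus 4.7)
The plan is to fix $A \in \FF$ (we may assume $A \notin \FF^\star$; otherwise $\FF$ consists entirely of pairwise-skew $k$-spaces, which is trivially small). Then $\FF$ decomposes as $\{A\} \sqcup \FF^\star \sqcup \bigsqcup_{i=1}^{k-1}\{B \in \FF : i_A(B) = i\}$, and since each element of $\FF^\star$ is included in $\FF_i$ for every $1 \le i \le k-1$, we obtain
\[
|\FF| = 1 + \sum_{i=1}^{k-1}|\FF_i| - (k-2)|\FF^\star|.
\]
I then invoke Theorem~\ref{thm:FFiFFkmi} to bound $|\FF_i| + |\FF_{k-i}|$ for each $1 \le i \le k/2$; when $k$ is even, the $i = k/2$ case directly gives $|\FF_{k/2}| \le q^{(k/2)^2}\gbin{n-k}{k/2}_q\gbin{k-1}{k/2-1}_q$ after noting $\gbin{k-1}{k/2}_q = \gbin{k-1}{k/2-1}_q$. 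Using also the symmetry $\gbin{k-1}{k-i-1}_q = \gbin{k-1}{i}_q$, the resulting pair bounds sum to $\sum_{i=1}^{k-1} q^{(k-i)^2}\gbin{k-1}{i-1}_q\gbin{n-k}{k-i}_q$, which by Lemma~\ref{lem:starcount} equals $\gbin{n-1}{k-1}_q - 1$ (the $i = k$ term of that identity contributes the missing $1$, representing $A$ itself). Hence $|\FF| + (k-2)|\FF^\star| \le \gbin{n-1}{k-1}_q$, which proves \eqref{eq:finalresult} since $k \ge 2$.

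Now suppose $2 \le k < n/2$ and $|\FF| = \gbin{n-1}{k-1}_q$, so every inequality above is tight. This forces $\FF^\star = \emptyset$ (via $(k-2)|\FF^\star| = 0$ for $k > 2$, or directly from the equality case of Theorem~\ref{thm:FFiFFkmi} when $k = 2$), $|\phi_A(B)| = q^{i_A(B)(k-i_A(B))}$ for every $B \in \FF \setminus \{A\}$, and $|\phi_A^{-1}(D)| \ge \gbin{k-1}{i}_q$ for every $D \in \gbin{V-A}{i}$ with $1 \le i \le k/2$. By Lemma~\ref{lem:prelim}\eqref{lem:prelime}, the first condition supplies, for each $B \in \FF \setminus \{A\}$, a unique subspace $W_B \subseteq B$ of dimension $i_A(B)$ such that $C \cap B = W_B$ for every $C \in \II_A(B)$; when $B \cap A \ne \{\zo_V\}$, necessarily $W_B = A \cap B$.

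The main obstacle is to convert this rigid structural information into the star property. My plan is to first prove that $\FF$ is intersecting, after which the uniqueness portion of Theorem~\ref{thm:vsEKR}---valid exactly when $k < n/2$---immediately delivers that $\FF$ is a star. To establish intersecting-ness I argue by contradiction: suppose some $B \in \FF$ has $B \cap A = \{\zo_V\}$. Then $W_B \subseteq B$ is skew to $A$, and a witness $C \in \II_A(B)$ is an element of $\FF$ distinct from $A$ meeting $B$ in exactly $W_B$. The double count $\sum_{D \in \gbin{V-A}{i}}|\phi_A^{-1}(D)| = |\FF_{k-i}|q^{i(k-i)}$ together with $|\phi_A^{-1}(D)| \ge \gbin{k-1}{i}_q$ forces $|\FF_{k-i}| \ge q^{i^2}\gbin{n-k}{i}_q\gbin{k-1}{i}_q$, and combined with the pair equalities this tightens to a per-$D$ rigidity. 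The delicate step is to exploit the covering-triple-free condition, applied to $\{A, B, C\}$ together with the configurations that the global $W_{B'}$-structure generates, to produce either an explicit covering triple or an over-count of some $|\phi_A^{-1}(D)|$, thereby contradicting the extremality of the configuration.
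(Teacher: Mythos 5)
Your proof of the inequality~\eqref{eq:finalresult} is correct and follows the paper's approach: fix $A$, decompose $\FF \setminus \{A\}$ by the function $i_A$, and bound $\size{\FF_i} + \size{\FF_{k-i}}$ via Theorem~\ref{thm:FFiFFkmi}, summing against the star-count identity of Lemma~\ref{lem:starcount}. Your exact inclusion-exclusion identity $\size{\FF} = 1 + \sum_{i=1}^{k-1}\size{\FF_i} - (k-2)\size{\FF^\star}$ (under the harmless assumption $A \notin \FF^\star$) is a slightly cleaner accounting for $\FF^\star$ than the paper's inequality $\size{\FF} \le 1 + \sum_i \size{\FF_i}$, but the content is the same.

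The uniqueness part, however, contains a genuine gap. You correctly extract the rigidity conditions from equality---$\FF^\star = \emptyset$, $\size{\phi_A(B)} = q^{i_A(B)(k-i_A(B))}$ for all $B$, $\size{\phi_A^{-1}(D)} \ge \gbin{k-1}{i}_q$---and you correctly identify the target (show $\FF$ is intersecting, then invoke the uniqueness half of Theorem~\ref{thm:vsEKR}, which is also what the paper does). But the actual contradiction is never produced: the sentence ``the delicate step is to exploit the covering-triple-free condition \ldots to produce either an explicit covering triple or an over-count'' is a restatement of the goal, not an argument. The missing core is the chain the paper executes for $k \ge 3$: choose $A$ so that it has a skew partner $B^\star$, take any $D \in \gbin{B^\star}{1}$, use $\size{\phi_A^{-1}(D)} \ge \gbin{k-1}{1}_q \ge 3$ to extract a second claimant $B \neq B^\star$, rule out $B \cap A \neq \{\zo_V\}$ (else $B, A, B^\star$ form a covering triple by Lemma~\ref{lem:prelim}\eqref{lem:prelimc}), deduce $i_A(B) = k-1$ and then $B^\star \in \II_A(B)$ from Lemma~\ref{lem:prelim}\eqref{lem:prelimb}, and finally hit the contradiction with the equality condition of Lemma~\ref{lem:prelim}\eqref{lem:prelime} (two distinct members of $\II_A(B)$ must meet $B$ in the same subspace, but $D \subseteq B \cap B^\star$ while $D \cap C = \{\zo_V\}$). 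Your per-$D$ double count ($|\FF_{k-i}| \ge q^{i^2}\gbin{n-k}{i}_q\gbin{k-1}{i}_q$) is true but does not by itself localize a contradiction, and you also omit that $k=2$ needs a separate argument (since $\gbin{k-1}{1}_q = 1$ fails to guarantee a second claimant, which the chain above requires).
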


\begin{proof} If possible, choose $A, B^\star \in \FF$ with $A \cap B^\star = \{\zo_V\}$, otherwise, choose $A \in \FF$ arbitrarily. As we have been doing all along, let $\FF_i = \FF_{i, A}$ be defined as in Definition \ref{def:iIphi}. Since $\displaystyle{\FF\backslash \{A\} = \cup_{i = 1}^{k-1} \FF_i}$, we have $\displaystyle{\size{\FF} \leq 1 + \sum_{i=1}^{k-1} \size{\FF_i}}$. For each $1 \leq i \leq \floor{(k-1)/2}$, we pair $\FF_i$ with $\FF_{k-i}$, in order to use Theorem \ref{thm:FFiFFkmi} to bound $\size{\FF_i} + \size{\FF_{k-i}}$. Note that in the case when $k$ is even, $\FF_{k/2}$ will be unmatched, but Theorem \ref{thm:FFiFFkmi} will give us a bound on $|\FF_{k/2}|$ which is still sufficient. To treat the odd and even cases simultaneously, define $\delta$ to be equal to one if $k$ is even and zero otherwise. Using Theorem \ref{thm:FFiFFkmi}, a straightforward change of index, and Lemma \ref{lem:starcount}, we now have
\begin{align*}
\size{\FF} & \leq 1+\sum_{i=1}^{k-1} \size{\FF_i}   =1 + \delta \size{\FF_{k/2}} +  \sum_{i = 1}^{\floor{(k-1)/2}} \left(\size{\FF_i} + \size{\FF_{k-i}}\right) \\
& \leq 1 + \delta q^{(k/2)^2}\gbin{n-k}{k/2}_q \gbin{k-1}{k/2-1}_q  \\ & \qquad+ \sum_{i = 1}^{\floor{(k-1)/2}}\left(q^{(k-i)^2}\gbin{n-k}{k-i}_q \gbin{k-1}{i-1}_q + q^{i^2} \gbin{n-k}{i}_q \gbin{k-1}{i}_q\right) \\
& = \sum_{i=1}^{k} q^{(k-i)^2}\gbin{n-k}{k-i}_q \gbin{k-1}{i-1}_q = \gbin{n-1}{k-1}_q.
\end{align*}
It remains to account for equality in Inequality \eqref{eq:finalresult}, and so assume that $2 \leq k < n/2$ and $\size{\FF} = \displaystyle{\gbin{n-1}{k-1}_q}$.  First note that if it was \emph{not} possible to choose $A, B^\star \in \FF$ with $A \cap B^\star = \{\zo_V\}$, then the family $\FF$ would be an intersecting family, and by Theorem \ref{thm:vsEKR}, $\FF$ is a maximum-sized star. So assume that $A, B^\star \in \FF$ with $A \cap B^\star = \{\zo_V\}$. We will complete the proof by showing that this is, in fact, not possible. In establishing Inequality \eqref{eq:finalresult} above, we used Inequality \eqref{eq:sizeFiFk-i} of Theorem \ref{thm:FFiFFkmi}. So in the case of equality in the former, the latter must also be an equality, and so, by Theorem \ref{thm:FFiFFkmi}, this means that $\FF^\star = \emptyset$, $\size{\phi_A(B)} = q^{i(k-i)}$ for all $B \in \FF_i$, and $\size{\phi_A^{-1}(D)} \geq \gbin{k-1}{1}_q$ for all $D \in \gbin{V - A}{1}$. 

We handle first the case $k=2$. Since $B^* \notin \FF^*$, there must be $C^* \in \FF$ such that $D := B^* \cap C^* \neq \{\zo_V\}$. By the equality condition in Lemma \ref{lem:prelim}\eqref{lem:prelime}, we must have $C' \cap B^* = D$ for all $C' \in \II_A(B^*)$, and thus $D \notin \phi_A(B^*)$. However, since $|\phi_A^{-1}(D)| \ge 1$, there must be $B \neq B^*$ with $D \in \phi_A(B)$. Again, since $B \notin \FF^*$ by Lemma \ref{lem:prelim}\eqref{lem:prelimc}, this implies $B = D \oplus (B \cap C)$ for some $C \in \II_A(B).$ Since $D \subset B \cap B^*$ and $\dim(B) = \dim(B^*) = 2$, we have $D = B \cap B^*$. Thus $B = (B \cap B^*) \oplus (B \cap C)$, which is a contradiction.

Now, suppose $k \ge 3$ and let $D \in \gbin{B^*}{1}$. In this case, $|\phi^{-1}_A(D)| \ge \gbin{k-1}{1}_q = q^{k-2}+\cdots+q+1\geq q+1 \geq 3$. Hence, if we let $D \in \gbin{B^*}{1}$, there exists $B \in \phi_A^{-1}(D)$ with $B \neq B^\star$ (regardless of whether $B^\star$ itself is or is not in $\phi_A^{-1}(D)$).

We claim that $B \cap A = \{\zo_V\}$. If this were not the case, then $\II_A(B) = \{A\}$, and by Lemma \ref{lem:prelim}\eqref{lem:prelimc}, $B = (B \cap A) \oplus D =(B \cap A) \oplus (B \cap B^\star)$. This would mean $B$, $A$, and $B^\star$ form a covering triple, and that would be a contradiction. Since $B \in \phi_A^{-1}(D)$ and $B \cap A = \{\zo_V\}$, there exists $C \in \II_A(B)$ with $B = D \oplus (B \cap C)$. Since $D$ is one-dimensional, $B \cap C$ is $(k-1)$-dimensional, and $i_A(B) = k-1$ (see Figure \ref{fig:phiAB} and Lemma \ref{lem:prelim}). Now $B \cap B^\star \neq \{\zo_V\}$ since $D$ is in that intersection. Hence, by Lemma \ref{lem:prelim}\eqref{lem:prelimb}, $\dim(B \cap B^\star) \geq i_A(B) = k-1$. But $B$ and $B^\star$ are distinct $k$ dimensional subspaces, and so $B \cap B^\star$ is a $(k-1)$-dimensional subspace. This means $B^\star \in \II_A(B)$. By Lemma \ref{lem:prelim}\eqref{lem:prelime}, since $\size{\phi_A(B)} = q^{i(k-i)}$, and since $C, B^\star \in \II_A(B)$, we must have $B^\star \cap B = C \cap B$. But this is a contradiction since $D \subseteq B^\star \cap B$, while $D \cap C = \{\zo_V\}$. The contradiction completes the proof.
\end{proof}

\section{Concluding remarks and further directions}
The natural next step is to consider the question of $d$-clusters for vector spaces. In light of this, we make the following definition.

\begin{definition}
    Let $V$ be an $n$-dimensional vector space over $\mathbb{F}_q$, and suppose $A_1,\dots,A_d \in \gbin{V}{k}$. If $\dim(A_1+\dots+A_d)\le 2k$ and $A_1 \cap \dots \cap A_d = \{\zo_V\}$, then we say that $A_1,\dots,A_d$ form a \emph{$d$-cluster}. A family $\FF \subset \gbin{V}{k}$ containing no $d$-cluster is called \emph{$d$-cluster-free}.
\end{definition}

We then conjecture the following

\begin{conj}
Let $V$ be an $n$-dimensional vector space over $\FF_q$, and let $k \ge d \ge 3$ be integers satisfying $n \ge \frac{dk}{d-1}$. Then, if $\FF \subset \gbin{V}{k}$ is $d$-cluster-free, it must satisfy $$|\FF| \le \gbin{n-1}{k-1}_q$$ with equality only if $\FF$ is a maximum-sized star.
\end{conj}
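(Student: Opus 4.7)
The plan is to proceed by induction on $d$, with $d = 3$ (in the range $n \ge 2k$) being Theorem A. Fix $d \ge 4$ and a $d$-cluster-free family $\FF \subseteq \gbin{V}{k}$ with $n \ge dk/(d-1)$, and adapt the infrastructure of Section \ref{sec:main}: choose $A \in \FF$, partition $\FF \setminus \FF^\star$ into $\FF_i$ via $i_A$, and define the claim sets $\phi_A$ and $\phi_A^{-1}$ exactly as in Definition \ref{def:iIphi}. The heart of the argument would be a generalization of Proposition \ref{prop:CstarforDE}: for a fixed $D$ skew to $A$, locate a single $C^\star \in \FF$ such that $B = D \oplus (B \cap C^\star)$ for every $B \in \phi_A^{-1}(D)$, so that the traces $T_B := B \cap C^\star$ all live inside the $k$-dimensional space $C^\star$. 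A direct dimension count then shows that for any distinct $B_1, \ldots, B_{d-1} \in \phi_A^{-1}(D)$, one has $\dim(B_1 + \cdots + B_{d-1} + C^\star) = \dim(D + C^\star) = \dim(D) + k \le 2k$; consequently, if $T_{B_1} \cap \cdots \cap T_{B_{d-1}} = \{\zo_V\}$, then $B_1, \ldots, B_{d-1}, C^\star$ form a $d$-cluster in $\FF$, a contradiction. Hence the family of traces $\{T_B : B \in \phi_A^{-1}(D)\}$ is $(d-1)$-wise intersecting inside $C^\star$.

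Granting a $(d-1)$-wise intersecting bound inside the $k$-dimensional space $C^\star$ --- naturally $\gbin{k-1}{k-\dim(D)-1}_q$ at most --- the remainder of the argument would mirror Theorem \ref{thm:FFiFFkmi} and Theorem \ref{thm:Main}: pair $\FF_i$ with $\FF_{k-i}$, apply the bipartite matching of Theorem \ref{thm:normalizedmatching}, double count pairs $(B, D)$, and sum via Lemma \ref{lem:starcount} to land at $\gbin{n-1}{k-1}_q$.

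The principal obstacle is the matching step, which currently uses $k \le n/2$ to guarantee a one-to-$q^{k-i}$ matching in Theorem \ref{thm:FFiFFkmi}; the conjectured hypothesis $n \ge dk/(d-1)$ is strictly weaker, and a new combinatorial ingredient seems needed to cope with the shortage of subspaces skew to $A$ in this regime. A second obstacle is securing a non-uniform analogue of Corollary \ref{cor:EKRtypenonuniform}: the present reliance on the Gerbner--Patk\'os bound for pairwise intersecting families must be upgraded to a mixed-dimension bound for $(d-1)$-wise intersecting families of subspaces of $C^\star$, for which no off-the-shelf vector-space result appears to be in the literature, and constructing one may itself require a separate inductive argument. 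Finally, the extremal characterization will require reproducing, at each level of the induction, the delicate $B^\star$ endgame closing the proof of Theorem \ref{thm:Main}, with potentially new edge cases arising as $d$ grows.
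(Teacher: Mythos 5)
This statement is a Conjecture, explicitly left open by the authors: the paper proves only the $d = 3$ case in the range $n \ge 2k$ (Theorem \ref{thm:Main}), and closes by suggesting that progress may require adapting the set-theoretic tools of Mubayi \cite{Mubayi:07}, Mubayi--Ramadurai \cite{MubayiRam:09}, and Currier \cite{Currier:21} rather than directly extending the $d = 3$ argument. There is no proof in the paper to compare against, and your proposal --- as you yourself acknowledge --- does not constitute one.

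That said, your outline is honest and correctly identifies the three hard points (the matching step, a $(d-1)$-wise intersecting EKR bound for mixed-dimension subspaces of a $k$-dimensional space, and the extremal characterization), none of which you resolve. Two refinements. First, the ``induction on $d$'' framing is a misnomer: the threshold $n \ge dk/(d-1)$ shrinks as $d$ grows, so the inductive hypothesis on $d-1$ covers a strictly smaller range of $n$ and cannot be leveraged; each $d$ would in effect need its own self-contained argument. Second, your dimension count producing the $(d-1)$-wise intersecting trace family is fine (each $B_i \subseteq D + C^\star$ since $B_i = D \oplus (B_i \cap C^\star)$ and $D \cap C^\star = \{\zo_V\}$, so $\dim(B_1 + \cdots + B_{d-1} + C^\star) = \dim D + k \le 2k$), but the traces $T_B = B \cap C^\star$ have dimension $k - \dim D$; when $\dim D$ is small every pair of such traces intersects inside $C^\star$ for trivial reasons, so the putative bound $\gbin{k-1}{k - \dim D - 1}_q$ is far from automatic. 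The paper sidesteps exactly this for $d = 3$ by pairing $D$ with a complementary-dimension $E$ and invoking the non-uniform Corollary \ref{cor:EKRtypenonuniform}; a non-uniform $(d-1)$-wise analogue is genuinely needed, not merely a cosmetic upgrade, and is not currently in the literature. In short, this is a reasonable map of the territory, but the open terrain remains uncrossed.
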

We note that this range of $n,d,k$ is the only for which the question is meaningful, since if $n < \frac{dk}{d-1}$, then any collection of $d$ vector spaces of dimension $k$ will have non-trivial intersection.

Our Theorem \ref{thm:Main} proves this conjecture for $d=3$ in the slightly limited range $n \ge 2k$ (although we did not prove equality in the case $n = 2k$). This is not entirely surprising, since even in the case of sets the cases $n \ge 2k$ and $n < 2k$ often have to be handled separately. In addition to tackling the case of $n < 2k$, it would be interesting to see if the tools used to attack the problem in sets for $d \ge 4$ (see e.g. Mubayi \cite{Mubayi:07}, Mubayi and Ramadurai \cite{MubayiRam:09} and Currier \cite{Currier:21}) could be applied to the context of vector spaces.

\newcommand{\ugst}{\relax}



\providecommand{\bysame}{\leavevmode\hbox to3em{\hrulefill}\thinspace}
\providecommand{\MR}{\relax\ifhmode\unskip\space\fi MR }
\providecommand{\MRhref}[2]{%
  \href{http://www.ams.org/mathscinet-getitem?mr=#1}{#2}
}
\providecommand{\href}[2]{#2}

\end{document}